\documentclass[pdftex,a4paper,11pt]{amsart}

\usepackage{amsfonts, amsthm, amsmath, cite, esint}
\usepackage{mathrsfs}
\usepackage[english]{babel}
\usepackage[pdftex]{color,graphicx}

\theoremstyle{plain}
  \newtheorem{theorem}{Theorem}
  \newtheorem{proposition}[theorem]{Proposition}

\theoremstyle{definition}
  
\theoremstyle{remark}

\newcommand{\comma}{\textrm{,}}
\newcommand{\period}{\textrm{.}}
\newcommand{\bra}[1]{\left( #1 \right)}
\newcommand{\sqa}[1]{\left[ #1 \right]}
\newcommand{\cur}[1]{\left\{ #1 \right\}}
\newcommand{\ang}[1]{\left< #1 \right>}
\newcommand{\abs}[1]{\left| #1 \right|}
\newcommand{\nor}[1]{\left\| #1 \right\|}

\newcommand{\rnum}{\mathbb{R}}
\newcommand{\nnum}{\mathbb{N}}

\newcommand{\eE}{\mathbb{E}}

\newcommand{\divex}[1]{\mathrm{div}_x\bra{#1}}
\newcommand{\dive}{\mathrm{div}}

\newcommand{\divey}[1]{\mathrm{div}_y\bra{#1}}

\newcommand{\veps}{\varepsilon}
\newcommand{\xe}{x_\veps}

\newcommand{\ye}{y_\veps}

\newcommand{\cL}{\mathcal{L}}

\newcommand{\cF}{\mathcal{F}}



\begin{document}


\author{Dario Trevisan}
\address{Scuola Normale Superiore, Piazza dei Cavalieri, 7, 56126, Pisa, Italy}
\email[D.~Trevisan]{dario.trevisan@sns.it}

\title{Lagrangian flows driven by $BV$ fields in Wiener spaces}


\begin{abstract}
We establish the renormalization property for essentially bounded solutions of the continuity equation associated to $BV$ fields in Wiener spaces, with values in the associated Cameron-Martin space; thus obtaining, by standard arguments, new uniqueness and stability results for correspondent Lagrangian $L^\infty$-flows. An example related to Neumann elliptic problems is also discussed.
\end{abstract}

\maketitle

\section{Introduction}


The finite dimensional theory of flows driven by weakly differentiable fields began with the work by R.J.~DiPerna and P.-L.~Lions, \cite{MR1022305}, where it is proved that Sobolev regularity for vector fields in $\rnum^n$ is sufficient to obtain existence and uniqueness of a generalized notion of flow driven by these fields. Since then it has found many developments and applications; for brevity, we refer to the exposition in \cite{MR2408257} mentioning only some important recent papers: \cite{MR2849722}, \cite{MR2369485}, \cite{MR2450159} and \cite{MR2375067}. The work of DiPerna and Lions did not cover the case of bounded variation ($BV$) fields, which arise naturally in many contexts: it was settled by L.~Ambrosio in \cite{MR2096794} and since then, $BV$ fields were considered in other settings, e.g.\ SDEs, in \cite{MR2891455}, or Fokker-Planck equations, in \cite{MR3016531}.

On abstract Wiener spaces, a theory of flows was developed somehow independently from that of DiPerna and Lions (see \cite{MR724704}, \cite{MR724705} and \cite{MR759104} for early works and \cite{MR2199294} for recent developments). In \cite{MR2475421}, tools from DiPerna-Lions theory were be applied also in the infinite-dimensional setting, obtaining existence and uniqueness of flows driven by Sobolev fields. The natural extension to Fokker-Planck equations was developed in \cite{MR2669050}. A comparison between the two approaches seems difficult in general, due to different assumptions on different norms: one approach might be better than the other, depending on the nature of the driving field (for more details, see the introduction of \cite{MR2475421}).

\subsection*{Aim of the article}
In Wiener spaces, uniqueness for flows driven by $BV$ fields was left open in \cite{MR2475421} and the aim of this article is to settle it. The motivation to deal with $BV$ fields is mainly due to the lack of a local analogue of the results for Sobolev fields obtained in \cite{MR2475421}: in Section \ref{section-example} below, we show in a concrete situation how a global $BV$ field arise naturally from the solution of an elliptic problem in a domain. If extension theorems for Sobolev classes on infinite dimensional domains were known, one might be able to work without $BV$ fields, but this is a rather delicate subject (see \cite{bogachev-extensions} for some recent results, in a negative direction).

Some reasons for the existence of this gap between the finite dimensional theory and the Wiener space theory can be traced in the fact that the theory of $BV$ maps on Wiener spaces (which began with the works by Fukushima and Hino in \cite{MR1761369} and \cite{MR1837539}) only recently has been studied from a geometric point of view, closer to the finite dimensional setting: see \cite{MR2558177}, \cite{MR2847889} and \cite{MR3034584}. 

For a presentation of the general problem of flows and the ideas involved in the Wiener space setting, we refer to the  well written introduction of \cite{MR2475421} and then to Section 5 therein for a rigorous derivation of the links between well-posedness (i.e.~existence and uniqueness) of flows driven by a field $b$ and that of the associated continuity equation,
\begin{equation}\label{eq-continuity} \frac{d}{dt} u_t + \dive\bra{ b_t u_t} = 0 \comma \end{equation}
where $\dive = \dive_\gamma$ denotes the divergence with respect to the underlying Gaussian measure.

While existence is settled rather easily, assuming bounds on $b$ and its distributional divergence $\dive b$, uniqueness is  a difficult issue, already in the finite dimensional setting. The DiPerna-Lions argument is based on the notion of renormalized solution, whose definition we recall here: a solution of \eqref{eq-continuity} is said to be renormalized if, for every $\beta \in C^1\bra{\rnum}$, with both $\beta'\bra{z}$ and $\beta\bra{z}-z\beta'\bra{z}$ bounded, it holds, in the distributional sense,
\begin{equation} \label{eq-def-renormalization} \frac{d}{dt}  \beta\bra{u_t}  + \dive \bra{ b_t \beta\bra{u_t} } - \dive b_t\sqa{\beta\bra{u_t}-u_t\beta'\bra{u_t}}  =0  \period \end{equation}
If all the solutions (in a certain space) are known to be renormalized, it is well-known that uniqueness holds in that space (for a precise statement, see e.g.\ the proof of Theorem 3.1 at the end of Section 3 in \cite{MR2475421}). 

In this article, therefore, we focus on the proof of the renormalization property, the main result being Theorem \ref{main-theorem} below: given a $BV$ vector field $b$ with integrable divergence, every solution of \eqref{eq-continuity} in $L^\infty\bra{(0,T)\times X}$ is renormalized: from this it is not difficult to recover statements about uniqueness and stability for $L^\infty$ Lagrangian flows as in \cite{MR2475421}.

\subsection*{Comments on the proof technique and structure of the paper}

In the finite dimensional setting, the approach developed in \cite{MR2096794} is based on a refined analysis of error terms arising from smooth approximations, with two different estimates: an anisotropic estimate, which is rather good in the regions where the measure-derivative $Db$ is mostly singular with respect to the Lebesgue measure, and an isotropic estimate, which is good instead in the regions where the derivative is mostly absolutely continuous. Then, an optimization procedure on the choice of approximations gives the renormalization property.

In the Wiener setting, a direct implementation of this method fails, because of error terms depending on the dimension of the space. Our contribution may be summarized in obtaining a refined anisotropic estimate which is well-behaved at every point and, after an optimization procedure, turns out to be sufficient to conclude the renormalization property.

This method works also in the finite dimensional setting and since the steps might prepare the reader for the Wiener case, we describe it briefly in Section \ref{finite-dimensional}.

Starting from Section \ref{section-wiener}, we deal uniquely with the Wiener setting: we recall some definitions and facts about Sobolev and $BV$ maps and then, in Section \ref{section-main-result} we state the main result, Theorem \ref{main-theorem}. In Section \ref{section-example} we discuss an example which we believe motivates the necessity for dealing with $BV$ fields. Section \ref{section-technical} is devoted to establish some technical facts that are instrumental to the proof of Theorem \ref{main-theorem}, which is finally discussed in Section \ref{section-proof}.

\subsection*{Acknowledgements}

We thank L.~Ambrosio and M.~Novaga for many discussions and suggestions on the subject and valuable comments that improved the manuscript. We warmly thank G.~Da Prato for helping us with references in Section \ref{section-example}.

\section{Renormalized solutions in $\rnum^d$}\label{finite-dimensional}

The aim of this section is to prove the renormalization property for $L^\infty$ solutions of the continuity equation associated to a finite dimensional $BV$ field, along the same lines as in Section \ref{section-proof} below: there, computations are a bit more involved and thus the hope is that this section might guide the reader towards Section \ref{section-proof}. In particular, the global structure of the proof is exactly the same.

This provides also an alternative proof of Theorem 3.5 in \cite{MR2096794}: for simplicity, here we work under global assumptions, but the arguments can be easily adapted to cover the $BV_{loc}$ case. For brevity, we completely refer to Section 5 in \cite{MR2408257}, for a detailed introduction of the finite dimensional setting. Recall that the class of test functions is given by $C_c^\infty ((0,T) \times \rnum^d)$.

\begin{theorem}\label{theorem-1}
Let $b \in L^1\bra{ (0,T); BV(\rnum^d; \rnum^d)}$, $\divex{ b_t } \in L^1\bra{ (0,T) \times \rnum^d}$. Any distributional solution $u = \bra{u_t} \in L^\infty\bra{(0,T)\times \rnum^d}$ of
\[ \frac{d}{dt} u_t + \divex{b_t u_t} = 0 \]
is a renormalized solution.
\end{theorem}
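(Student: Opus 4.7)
My plan is to use the classical regularization scheme by convolution in $x$, supplemented by the refined anisotropic commutator estimate announced in the introduction. Pick a family of smooth mollifiers $\rho_\veps$ (to be tuned below) and set $u_t^\veps := u_t * \rho_\veps$; convolving the continuity equation yields
\[ \frac{d}{dt} u_t^\veps + \divex{b_t u_t^\veps} = r_t^\veps, \]
with commutator $r_t^\veps := \divex{b_t u_t^\veps} - \bra{\divex{b_t u_t}} * \rho_\veps$. Since $u_t^\veps$ is smooth in $x$, the classical chain rule gives \eqref{eq-def-renormalization} for $\beta\bra{u_t^\veps}$ up to an error $\beta'\bra{u_t^\veps}\, r_t^\veps$ on the right-hand side. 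The whole proof reduces therefore to showing that $\beta'\bra{u_t^\veps}\, r_t^\veps \to 0$ in $\mathscr{D}'\bra{(0,T)\times \rnum^d}$, since all the other terms pass to the limit thanks to the $a.e.$\ convergence $u_t^\veps \to u_t$, the uniform $L^\infty$ bound on $u$, and the integrability of $\divex{b_t}$.

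\textbf{Anisotropic commutator estimate.} The commutator admits the standard rewriting as a convolution of $\nabla\rho_\veps$ against the finite differences of $b_t$, plus a lower-order term involving $\divex{b_t}$ that is easily dispatched. Since $b_t$ is only $BV$, a fixed isotropic kernel produces an $O(1)$ error, so I would follow Ambrosio and introduce a Borel field $A : \rnum^d \to \mathrm{GL}(d)$ of invertible matrices together with an anisotropic kernel $\rho_\veps^{A(x)}(z)$, obtained by rescaling $\rho_\veps$ through $A(x)^{-1}$, aligned with the local geometry of $Db_t$. A careful use of the $BV$ chain rule should then deliver a bound of the shape
\[ \limsup_{\veps \to 0^+} \int_0^T\!\!\int_{\rnum^d} \abs{\varphi(t,x)} \abs{r_t^\veps(x)}\,dx\,dt \;\le\; C\nor{\varphi}_\infty \int_0^T\!\!\int_{\rnum^d} \Lambda\bra{A(x), M_t(x)}\, d\abs{Db_t}(x)\,dt, \]
valid for every test function $\varphi$, where $M_t = dDb_t / d\abs{Db_t}$ and $\Lambda$ quantifies the ``mismatch'' between the anisotropy encoded by $A$ and the matrix $M_t$. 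The crucial novelty, with respect to the classical two-estimate approach of \cite{MR2096794}, is that $\Lambda$ is well-behaved \emph{at every} point, removing the need to split into absolutely continuous and singular regimes.

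\textbf{Optimization and conclusion.} The final step is to minimize the right-hand side above over Borel choices of $A$. On the absolutely continuous part of $Db_t$, $A(x) = \mathrm{Id}$ already makes $\Lambda$ vanish in the limit; on the singular part, Alberti's rank-one theorem gives $M_t(x) = \xi(x) \otimes \eta(x)$ for $\abs{D^s b_t}$-a.e.\ $x$, so the choice of $A(x)$ fixing $\eta(x)$ and strongly compressing the orthogonal directions forces $\Lambda$ to be as small as desired. After smoothing $A$ off a set of arbitrarily small $\abs{Db_t}$-measure via Lusin's theorem and then letting $\veps \to 0$, we obtain $\beta'\bra{u_t^\veps}\, r_t^\veps \to 0$ in $\mathscr{D}'$, which closes the argument. \textbf{Main obstacle.} The hard part is producing the refined anisotropic estimate with $\Lambda$ uniformly small at every point, instead of relying on a separate isotropic bound for the absolutely continuous part; this demands a delicate tensorial analysis of the commutator through the $BV$ chain rule, and is precisely the ingredient whose counterpart will have to be reworked in the Wiener setting of the later sections.
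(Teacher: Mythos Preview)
Your overall four-step scaffold (mollify, approximate renormalization, anisotropic commutator bound, optimization) matches the paper's, but the content of the last two steps diverges from what the paper actually does, and your optimization step contains a gap.

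\textbf{Where your optimization breaks.} You claim a single anisotropic estimate with a density $\Lambda\bra{A(x),M_t(x)}$ and then split anyway: on the singular part you invoke Alberti's rank-one theorem, and on the absolutely continuous part you assert that ``$A(x)=\mathrm{Id}$ already makes $\Lambda$ vanish in the limit''. This last assertion is not justified. With a fixed (even isotropic) kernel the anisotropic bound produces, at each point, a quantity of the form $\int \abs{\dive_y\bra{M_t(x)y\,\rho(y)}}\,dy$, which is \emph{not} zero for a generic matrix $M_t(x)$; the classical way to kill the absolutely continuous contribution is precisely the separate isotropic estimate (strong $L^1$ convergence of difference quotients) that you said you were avoiding. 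So as written, your single-estimate scheme does not close on the absolutely continuous part, and you have silently reintroduced the two-estimate dichotomy you meant to bypass. The reliance on Alberti's rank-one theorem is also extraneous here and, more importantly, does not transfer to the Wiener-space setting the paper is building toward.

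\textbf{What the paper does instead.} The paper keeps the mollifier $\rho$ fixed (not $x$-dependent, no matrix field $A$, no Lusin argument) and derives the anisotropic bound
\[
\limsup_{\veps\to 0}\int\abs{\varphi\,\beta'(u^\veps)\,r^\veps}\,dx\,dt \;\le\; \nor{u}_\infty\nor{\beta'}_\infty\int \abs{\varphi}\,\Lambda_\rho\,d\abs{Db},\qquad \Lambda_\rho(t,x)=\int\abs{\dive_y\bra{M_t(x)y\,\rho(y)}}\,dy,
\]
and then optimizes over the \emph{kernel} $\rho$. The key point, which replaces both Alberti and the isotropic estimate, is that for \emph{every} matrix $M$ one has $\inf_\rho \int\abs{\dive_y\bra{My\,\rho(y)}}\,dy=0$: take $\rho=\frac1T\int_0^T u_t\,dt$ where $u_t$ solves $\partial_t u_t+\dive_y(My\,u_t)=0$ with smooth compactly supported $u_0$, and observe $\int\abs{\dive_y(My\,\rho)}\,dy\le 2/T$. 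Since the defect measure $\sigma$ satisfies $\abs{\sigma}\le \Lambda_\rho\abs{Db}$ for every admissible $\rho$, taking the infimum gives $\sigma=0$ without ever distinguishing the absolutely continuous and singular parts of $Db$. This device is exactly what survives the passage to infinite dimensions in Section~\ref{section-proof}.
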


\begin{proof}
We enumerate the essential steps to prove the result, with the convention that step $n$ is then developed with more details details in the correspondent subsection below. 

\begin{description}

\item[1. Mollification] We set up a two parameter family (with parameters $\rho$ varying in some set and $\veps>0$) of mollified solutions $u^\veps_\rho$ that solve in the distributional sense, the equation
\begin{equation}\label{eqremainder} \frac{d}{dt} (u_\rho^\veps)_t + \dive\bra{b (u_\rho^\veps)_t} =  (r_\rho^\veps)_t \period \end{equation}
For simplicity, we omit in what follows the dependence on $\rho$.
\item[2. Approximate renormalization] We prove that $u^\veps$ and all the terms above are sufficiently smooth so that, given any $\beta \in C^1\bra{\rnum}$, with both $\beta'\bra{s}$ and $\beta\bra{s} - s\beta'\bra{s}$ uniformly bounded, standard calculus can be applied to deduce that
\begin{equation}\label{eq-mollified-renormalized}
 \frac{d}{dt} \beta\bra{u^\veps_t} + \divex{b \beta\bra{u^\veps_t}} - \sqa{\beta\bra{u^\veps_t} - \beta'\bra{u^\veps_t} u^\veps_t} \divex b =  \beta'\bra{u^\veps_t} r^\veps \period \end{equation}
\item[3. Anisotropic estimate] We prove that there exists some function $\Lambda_\rho\bra{t,x}$ such that, for every test function $\varphi$, it holds
\begin{equation} \label{anisotropic} \limsup_{\veps \to 0} \int_0^T\int_{\rnum^d} \abs{\varphi \beta'\bra{u^\veps_t} r^\veps} dt dx \le \nor{u}_\infty \nor{\beta'}_\infty \int_{(0,T)\times \rnum^d} \abs{\varphi} \Lambda_{\rho}\, d\abs{Db} \period \end{equation}
where $\abs{Db} = \abs{Db_t} dt$ defines a finite Borel measure on $(0,T)\otimes \rnum^d$. This entails that the distribution
\[ \frac{d}{dt} \beta\bra{u_t} + \dive \bra{b \beta\bra{u_t}} - \sqa{\beta\bra{u_t} - \beta'\bra{u_t} u_t} \dive\bra{b} = \sigma\]
is a real valued measure with total variation smaller than $\Lambda_\rho \abs{Db}$, the so-called defect measure.
\item[4. Optimization] We show that
\[ \abs{\sigma} \le \bigwedge_\rho \Lambda_\rho \abs{Db} = 0\]
which settles the renormalization property.
\end{description}
\end{proof}

\subsection{Mollification}
Let $\rho$ be any smooth function defined on $\rnum^d$, with compact support and $\int \rho = 1$. Given $\veps >0$, and $f \in L^1_{loc} \bra{(0,T)\times\rnum^d}$, we mollify by convolution along the space variables, defining
\[ T^\veps_\rho \varphi \bra{t,x} =  \int_{\rnum^d} \varphi\bra{t,x_\veps} \rho\bra{y} dy,\]
where we write here and in all the subsections below, $x_\veps = x- \veps y$. The adjoint operator (in $L^2((0,T)\times \rnum^d)$) is given by the expression
\[(T^\veps_\rho)^* \varphi\bra{t,x} = \int_{\rnum^d} \varphi\bra{t, x^\veps} \rho\bra{y} dy \comma \]
where we write here and in all the subsections below, $x^\veps = x+ \veps y$. These operators preserve test functions and so we also define $(T^\veps_\rho)^*$ on distributions, by
\[ \ang{\varphi, \bra{T^\veps_{\rho}}^* L} = \ang{T^\veps_{\rho}\varphi, L} \period\]
We let finally $u_\rho^\veps = (T_\rho^\veps)^*u$, so that it holds
\[ (r_\rho^\veps)_t = \dive\bra{ b (u_\rho^\veps)_t} - (T_\rho^\veps)^*\dive\bra{b u_t}  \period\]

\subsection{Approximate renormalization}\label{approximate-renormalization}
To keep notation simple, we frequently omit here and below the dependence on $t$ and $\rho$, since they play no role.

The function $u^\veps$ is smooth with respect to the space variables, so that the only thing to be proved to justify the computations stated above, is that the distribution $r^\veps$ is (induced by) an integrable function. It is enough to prove that both $\dive\bra{b u^\veps}$ and $\bra{T^\veps}^*\dive\bra{b u}$ are integrable functions.

\subsubsection{Equivalent expressions for $\dive\bra{b u^\veps}$ and $\bra{T^\veps}^*\dive\bra{b u}$ via integration by parts}

Let $\varphi$ be a test function and compute
\[ \int \varphi \,\dive\bra{b u^\veps} = -  \int \ang{\nabla \varphi, b} u^\veps = -  \int  u \,T^\veps \ang{\nabla \varphi, b} \period\]
Integrating by parts, we obtain 
\[  T^\veps\ang {\nabla \varphi , b}\bra{x} = \frac 1 \veps \int \varphi\bra{\xe} \divey{ b\bra{\xe} \rho\bra{y}} dy = \int \varphi\bra{\xe} A_\veps\bra{x,y} \period \]
If we prove that $A_{\veps} \in L^{1}\bra{\rnum^{2d}}$,  with the change of variables $(x,y) \mapsto (x^\veps, y)$ we get
\[ \int \varphi\, \dive\bra{b u^\veps} = - \int \varphi \bra{x} \int u^\veps\bra{x^\veps} A_{\veps} \bra{x^\veps, y} dy \period\]
which gives $\dive\bra{b u^\veps} \in L^1$. For $\bra{T^\veps}^*\dive\bra{b u}$ we proceed similarly,
\[ \int \varphi \bra{T^\veps}^*\dive\bra{b u}  = - \int u \ang{ \nabla T^\veps \varphi, b }\period \]
Then, integrating by parts,
\[ \ang{\nabla T^\veps \varphi , b} \bra{x} = \frac 1 \veps \int \varphi\bra{\xe}  \divey{ b\bra{x} \rho\bra{y}} dy \period =  \int \varphi\bra{\xe} B_\veps \bra{x, y} dy \period \]
If we prove $B_\veps \in L^{1}\bra{ \rnum^{2d}}$, we conclude that $\bra{T^\veps}^*\dive\bra{b u} \in L^1$. Despite the fact that computations for $A_\veps$ and $B_\veps$ are trivial, we sketch here a general argument that is helpful in the Wiener setting.

\subsubsection{Integrability of $A_\veps$ and $B_\veps$ via divergence identities}

Let $M$ be any linear transformation of $\rnum^{2d}$ which preserves the Lebesgue measure $\cL^{2d}$. Then, for every field $c: \rnum^{2d} \to \rnum^{2d}$, a distributional identity holds:
\begin{equation}\label{eq-divergence-general} \dive \bra{ c\circ M } \bra{z} =\sqa{\dive \bra{ M c } }\bra{Mz} \comma \end{equation}
which follows immediately from the identity 
\[ \nabla \bra{\phi \circ M^{-1} } =  \bra{ M^{-1}}^* \bra{\nabla \phi } \circ  M^{-1}\]
and the invariance of the measure under $M$ and its inverse. If we take $M\bra{x,y} = \bra{x-s y, y}$ and $c\bra{x,y} = (0,v\bra{x,y})$, we obtain
\begin{equation}\label{eq-divergence-euclidean} \divey{ v\bra{x_s, y}  } = -s \sqa{\divex{v} } \bra{ x_s, y} + \sqa{\divey{v} } \bra{x_s,y} \period\end{equation}
With $v\bra{x,y} = b\bra{x}$ and $s = \veps$, we obtain
\[ \divey{ b\bra{\xe}  } = -\veps \sqa{\dive b } \bra{ x_\veps}  \in L^1 \period \]
From this fact we conclude that
\[ \veps A_\veps \bra{x,y} = \rho\bra{y} \divey{ b\bra{\xe}  } + \ang{ b\bra{\xe}, \nabla \rho \bra{y} }  \in L^1 \period\]
A computation for $B_\veps$ along the same lines is even easier and we omit it.

\subsection{Anisotropic estimate}

We prove that \eqref{anisotropic} above holds true, with
\[ \Lambda_{\rho} \bra{t,x}  = \int \abs{ \divey{ M_t\bra{x}y \rho\bra{y} }} dy \]
and $M_t\bra{x} \abs{Db} \bra{t,x} = Db\bra{t,x}$ is the polar decomposition of $Db = Db_t dt$ with respect to its total variation measure, which is easily seen to be $\abs{Db} = \abs{Db_t} dt$.

We proceed as follows. First, we fix $\veps>0$ and assume $b$ to be smooth, in order to obtain an estimate for $r^\veps$ in terms of $Db$. Then, still keeping $\veps$ fixed, we extend the validity of this estimate to any $BV$ vector field. Finally, we let $\veps \to 0$ and conclude.

For simplicity, but without any loss of generality, we assume that both $\nor{\beta'}_\infty \le 1$ and $\nor{u}_\infty \le 1$. Recall also that, as above, we omit to write as subscripts both $\rho$ and $t$.

\subsubsection{Fix $\veps>0$ and let $b$ be smooth}

In Section \ref{approximate-renormalization}, we obtained that
\[ r^\veps \bra{x}= \int u^\veps \bra{x^\veps} \sqa{ B_\veps \bra{x^\veps, y } -  A_\veps\bra{x^\veps,y} } dy \period \]
Therefore, we estimate
\begin{equation}\label{initial-commutator} \int \abs{ \varphi \beta'\bra{u_\veps} r^\veps } \le \int \abs{\varphi}\bra{x}\abs{ B_\veps \bra{x^\veps, y } -  A_\veps\bra{x^\veps,y} } dx dy \period\end{equation}
After a change of variables $(x,y) \mapsto (x_\veps,y)$, and recalling the explicit expressions for $A_\veps$ and $B_\veps$, we have that the RHS above is equal to
\[  \int \abs{\varphi}\bra{x_\veps}\abs{ \divey{ \frac{ b\bra{x} - b\bra{x_\veps} }{\veps} \rho\bra{y} } } dx dy \period\]
Since $b$ is assumed to be smooth, we have
\[  \frac{  b\bra{x} - b\bra{x_\veps}}{\veps} = - \fint _0^\veps \frac{d}{ds} b\bra{x_s} = \fint_0^\veps Db\bra{x_s} y  \period \]
For brevity, we write here and below $\fint_0^\veps f\bra{s} = \frac{1}{\veps }\int_0^\veps f\bra{s} ds$. Exchanging divergence and integration with respect to $s$,
\begin{equation}
\label{time-integral}
 \divey{\frac{ b\bra{x} - b\bra{x_\veps} }{\veps}  \rho\bra{y} } =  \fint_0^\veps \divey{  Db\bra{x_s} y \rho \bra{y} } \period \end{equation}
Let $c\bra{x,y} =  \rho \bra{y} Db\bra{x} y  = \rho \bra{y} \partial_y b\bra{x} $: by identity \eqref{eq-divergence-euclidean}, we have
\[ \divey{  Db\bra{x_s} y \rho \bra{y} } = -s \sqa{\divex{c}} \bra{x_s,y} + \divey{ c} \bra{x_s, y} \period\]
Since $\divex{c}$ involves further derivatives of $b$, the following identity is crucial:
\[ \divex{ c } \bra{x_s, y} = \rho\bra{y}\sqa{ \partial_y \dive b} \bra{x_s, y} =   - \rho\bra{y} \frac{d}{ds} \dive b \bra{x_s} \]
because it allows to integrate by parts and conclude that the expression in \eqref{time-integral} coincides with
\[ \rho\bra{y} \sqa{ \dive b\bra{x_\veps} -\fint_0^\veps \dive b \bra{x_s} } +\fint_0^\veps \divey{ c} \bra{x_s, y} \period\]

We estimate therefore \eqref{initial-commutator} above with
\begin{equation}
\label{equation-middle-commutator}
 \int \int \abs{\varphi} \bra{x_\veps}  \sqa{ \fint_0^\veps \abs{ \divey{ c } \bra{x_s, y} }  + \abs{ \fint_0^\veps \dive b \bra{x_s}  - \dive b \bra{x} }\rho\bra{y} }dy dx \period \end{equation}

\subsubsection{Keeping $\veps>0$ fixed, we extend the estimate to a general $b$}

We focus on the first term in the sum \eqref{equation-middle-commutator} just above. Exchanging integrations and changing variables $(x,y) \mapsto (x^s,y)$, we find an equivalent expression, of the form
\[  \int \sqa{ \int \abs{\varphi}_\veps \bra{x,y} \Lambda_\rho \bra{x,y} dy } \abs{Db} \bra{x} dx \comma \]
where 
\[ \abs{\varphi}_\veps \bra{x,y} =   \fint_0^\veps \abs{\varphi\bra{x_{\veps-s}}}  \comma \quad \Lambda_\rho  \bra{x,y} = \abs{ \divey { M \bra{x} y \rho \bra{y} } } \]
and $M\bra{x}$ is defined by the identity $M\bra{x} \abs{Db}\bra{x} = Db\bra{x}$. The crucial observation is that this is an expression of the form
\begin{equation}
\label{eq-reshetnyak}
 \int f\bra{x, \frac{ Db }{\abs{Db} } \bra{x} } \abs{Db}\bra{dx} \comma\end{equation}
where $f: \rnum^d \times \mathbb{S}^{d^2-1}$ is continuous and bounded. By Reshetnyak continuity theorem (Theorem 2.39 in \cite{MR1857292}) we extend this estimate to a general $BV$ vector field. More precisely, we approximate $b$ with a sequence of smooth vector fields $b_n$, such that
\[ \lim_{n\to \infty} \nor{b_n - b}_1 = 0, \quad \lim_{n\to \infty} \nor{\dive b_n - \dive b}_1 = 0  \]
and $\abs{Db_n} \bra{ (0,T) \times \rnum^d} \to \abs{Db_n} \bra{ (0,T) \times \rnum^d}$ (such a sequence exists, e.g.\ by convolution with a smooth kernel).

The second term in \eqref{equation-middle-commutator} is easily seen to be continuous with respect to $L^1$ convergence of $\dive b$: we obtained that $\int \abs{ \varphi \beta'\bra{u^\veps} r^\veps}$, for a general $BV$ field is estimated by the sum of two terms:
\begin{equation} \label{final-commutator-1} \int \sqa{ \int \abs{\varphi}_\veps \bra{x,y} \Lambda_\rho \bra{x,y} dy } d \abs{Db}\bra{x} \end{equation}
and
\begin{equation}
\label{final-commutator-2}
\int \int \abs{\varphi}\bra{x_\veps} \abs{ \fint_0^\veps \dive b \bra{x_s}  - \dive b \bra{x} } \rho\bra{y} dy dx \period \end{equation}

\subsubsection{We let $\veps \to 0$}

We consider separately each term that we obtained. In \eqref{final-commutator-2}, we estimate $ \abs{\varphi}\bra{x}$ with its supremum and then use strong continuity in $L^1$ of translations, together with the fact that $\rho$ has compact support, to show that it converges to zero. In \eqref{final-commutator-1}, we exploit the fact that $\varphi$ a test function, so that $\abs{\varphi}_\veps\bra{x,y}$ converges pointwise everywhere to $\abs{\varphi}\bra{x}$ and dominated by some constant: by Lebesgue's theorem with respect to $\abs{Db}$, we obtain \eqref{anisotropic}.

\subsection{Optimization}
We prove that, for every square matrix $M \in \rnum^{d\times d}$, it holds
\[ \inf\cur{\int \abs{ \dive_y \bra{My \rho\bra{y} }} dy  \, : \, \rho \in C^\infty_c (\rnum^d), \rho \ge 0, \int \rho = 1 } = 0\period\]

This is precisely Lemma 5.1 in \cite{MR2408257}: for completeness, we report here the proof, that will be used, almost verbatim, in Section \ref{section-proof} below. We argue that, for any $T>0$, there exists some $\rho \in C^\infty_c \bra{\rnum^d} $ such that
\[ \int \abs{\divey{My \rho\bra{y}}} dy \le \frac 2 T \period\]

Given the vector field $\hat{M}: y \mapsto My$, let $u_0$ be any smooth, non-negative and compactly supported  function with  $\int u_0 =1$. The solution $\bra{u_t}$ of the continuity equation with initial datum $u_0$,
\[ \frac{d} {dt} u_t  + \dive \bra{\hat{M} u_t} = 0 \comma\]
is smooth, non-negative and compactly supported and the same applies to its average $\frac{1}{T}\int_0^T u_t dt =: \rho$. Finally, we estimate
\[ \int \abs{\divey{My \rho\bra{y}}} dy\comma \]
by taking any smooth function $\varphi$ and computing $\int \varphi \bra{y} \divey{My \rho\bra{y} } dy$:
\[ \frac 1 T \int_0^T \int \varphi \bra{y}\divey{\hat{M}\bra{y} u_t\bra{y} } = \frac 1 T \bra{ \int \varphi \bra{y} \bra{u_T - u_0} dy } \le \frac{2} {T} \nor{\varphi}_\infty\period \]

\section{Sobolev and $BV$ spaces on Wiener spaces} \label{section-wiener}


Let us fix some Wiener space $(X, \gamma, H)$: $X$ is a separable Banach space, $\gamma$ is a non-degenerate Gaussian measure on $X$ and $H$ denotes its Cameron-Martin space. Moreover, let $Q \in \cL \bra{X^*, X}$ be the covariance operator associated with $\gamma$.  

\subsection{Smooth maps}
Let us fix an orthonormal basis $\bra{h_n = Qx^*_n}_{n\ge1} \subseteq  H$ induced by elements in the dual space $X^*$ and let $\pi_N: X \to X$ ($N\ge1$) be the \emph{projection} operator, defined on $X$ by
\[ \pi_N\bra{x} = \sum_{n=1}^N \ang{x, x^*_n}h_n \period \]
With a slight abuse of notation, we identify its image with $\rnum^N$.

Given any separable Hilbert space $K$, a map $b: X \to K$ is said to be \emph{cylindrical} if, for some $N \in \nnum$, there exists $k_1, \ldots k_N \in K$ and $b_1,\ldots b_N: \rnum^N \to \rnum$ such that for $\gamma$-a.e.\ $x\in X$, it holds
\[ b\bra{x} = \sum_{i=1}^N  b_i \bra{\pi_N \bra{x} } k_i \period \]
In the case $K = H$, we require moreover that $k_i = h_i$, and use the term \emph{field} to indicate $K$-valued maps. A cylindrical map is said to be \emph{smooth} if there is a representation as above with all the $b_i$'s bounded together with their derivatives.

On smooth cylindrical maps, the Sobolev-Malliavin gradient is defined as
\[ \nabla b \bra{x} = \sum_{i=1}^N  k_i \otimes \bra{Q D_F b_i \bra{x}} \in K \otimes H\comma\]
where $D_F b \bra{x} \in X^*$ is the Fr\'echet derivative of $b_i$ at $x$. The Gaussian divergence of a smooth cylindrical field $b$ (taking values in $H$) is defined by
\[ \dive b \bra{x} = \sum_{i=1}^N \ang{ \nabla b_i\bra{x}, h_i}_H - x^*_i\bra{x} b_i\bra{x} \period\]
For a smooth cylindrical map $b$, taking values in $K\otimes H$, we define its divergence first by representing it as $b = \sum_i k_i \otimes b_i$ and then letting $\dive b = \sum_i k_i \dive b_i$.

It is customary and useful to endow the product spaces $K\otimes K'$ with the Hilbert-Schmidt norm, thus reproducing a Hilbert space structure. 

The following integration by parts formula actually justifies the definition of divergence: for any smooth cylindrical function $\varphi$ and field $b$ it holds
\[ \int \ang{\nabla \varphi, b}_H d\gamma = - \int \varphi\,  \dive b \, d\gamma \period\]
This identity can be generalized to smooth cylindrical $K$-valued maps $\varphi$ and $K\otimes H$-valued maps $b$, arguing componentwise:
\begin{equation}\label{eq-duality-wiener} \int \ang{\nabla \varphi, b}_{K\otimes H} d\gamma = - \int \ang{\varphi ,\dive b}_K d\gamma \period \end{equation}

\subsection{Sobolev spaces and $BV$ maps}

We recall some basic facts about Sobolev-Malliavin spaces, referring to Chapter 5 in \cite{MR1642391} for a detailed description. Given $p \in [1,\infty[$, one considers either the abstract completion  of smooth $K$-valued maps with respect to the norm
\[ \nor{\varphi}_{1,p}  = \nor{ \abs{b}_K + \abs{\nabla b}_{K\otimes H} }_{L^p\bra{X,\gamma}} \comma\]
or the space of maps $\varphi \in L^p\bra{X,\gamma;K}$ such that there exists some $\nabla \varphi \in L^p\bra{X,\gamma;K\otimes H}$ which satisfies \eqref{eq-duality-wiener}, for every smooth map $b$. A well-known result (see e.g.\ Proposition 5.4.6 in \cite{MR1642391}) shows that the two definitions are equivalent: we denote this space by $W^{1,p}\bra{X,\gamma: K}$. In particular, smooth $K$-valued maps are dense.

We briefly introduce $BV$ maps, referring to \cite{MR2558177} for more details: $BV(X,\gamma; K)$ is defined as the space of $u \in L \log^{1/2}L \bra{X,\gamma;K}$ such that there exists some $K\otimes H$-valued measure $Du$ on $X$, with finite total variation, such that for every smooth $K$-valued cylindrical map $\varphi$ it holds
\[  \int \ang{ \varphi, dDu }_{K\otimes H} = -\int u \, \dive \varphi \,   d\gamma \period \]
Theorem 4.1 in \cite{MR2558177} provides the following alternative characterization: $\varphi  \in BV(X,\gamma; K)$ if and only if there exists some sequence $\varphi_n$ of smooth cylindrical maps such that, as $n\to \infty$, $\nor{\varphi_n -\varphi}_1 \to 0$ and $\nor{\nabla \varphi_n}_1$ is uniformly bounded (the smallest bound among all the sequences is exactly the total variation $\abs{D\varphi}(X)$). Actually, there, it is stated in the scalar case, but the argument can be easily extended to deal with general $K$-valued maps.

Using \eqref{eq-duality-wiener}, we introduce the notion of distributional divergence for a map $b$ taking values in $K\otimes H$: in particular we may consider sufficient conditions to ensure that \eqref{eq-duality-wiener} holds true for some function $\dive b \in L^p(X,\gamma)$. If $p \in ]1,\infty[$, it can be proved that $b \in W^{1,p}\bra{X,\gamma;H}$ estails that $\dive b \in L^{p}(X,\gamma)$: the case $p = 2$ is elementary and the others follows essentially from the boundedness of the Riesz transform on Wiener spaces (see Section 5.8 in \cite{MR1642391}). To the author's knowledge, the validity of the correspondent statement for $p=1$ is open.

\section{The continuity equation and statement of the main result} \label{section-main-result}

To give a meaning to the continuity equation \eqref{eq-continuity}, we introduce a suitable class of \emph{test functions}, precisely those  of the form $\varphi\bra{t, \pi_N\bra{x}}$ for some smooth bounded $\varphi\bra{t, y}$, supported in some strip $[\delta, T-\delta] \times \rnum^N$. Then, given a (time-dependent) $H$-valued vector field $b = (b_t)$, we say that $u=\bra{u_t}$ is a distributional solution to \eqref{eq-continuity}, if for every test function $\varphi$ it holds
\[ \int_0^T \int \bra{\frac{d}{dt} \varphi} u_t d\gamma   + \int_0^T \int \ang{\nabla \varphi , b_t }_H u_td\gamma = 0\period\]
As already mentioned in the introduction, for a general overview of the Sobolev case and the links between continuity equations and flows, we completely refer to \cite{MR2475421}. In this work, we only focus on the renormalization property for solutions $u \in L^\infty\bra{(0,T)\times X}$. Precisely, our main result is the following theorem.

\begin{theorem} \label{main-theorem}
Let $b \in L^1 \bra{ (0,T); BV\cap L^p (X, \gamma; H) }$ ($p>1$) with $\dive b \in L^1\bra{(0,T)\times X}$. Any distributional solution $(u_t) \in L^\infty((0,T)\times X)$ of
\[ \frac{d}{dt} u_t + \dive\bra{b_t u_t} = 0 \]
is a renormalized solution.
\end{theorem}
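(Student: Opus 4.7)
The plan is to mirror, essentially verbatim, the four-step structure of Theorem \ref{theorem-1}, replacing everywhere the Lebesgue measure by $\gamma$ and the Euclidean divergence by the Gaussian divergence $\dive_\gamma$, while carefully tracking the extra Ornstein-Uhlenbeck-type term $-\ang{x,\cdot}$ that the Gaussian density introduces in every integration by parts. For the mollification step, I would fix a smooth cylindrical $\rho\ge 0$ on $X$ with $\int \rho\, d\gamma = 1$, depending only on the first $N$ Cameron-Martin coordinates, and set
\[ T^\veps_\rho \varphi(x) = \int \varphi(x - \veps y) \rho(y) \, d\gamma(y), \qquad u^\veps_\rho := (T^\veps_\rho)^* u, \]
the adjoint being computed through the Cameron-Martin formula, so that $u^\veps_\rho$ is smooth along $h_1,\ldots,h_N$. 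The approximate-renormalization step then proceeds as in Section \ref{approximate-renormalization}: integration by parts rewrites $\dive(b u^\veps_\rho)$ and $(T^\veps_\rho)^* \dive(bu)$ as integrals of kernels $A_\veps,B_\veps$ on $X\times X$, whose integrability against $\gamma\otimes\gamma$ follows from the Wiener analogue of \eqref{eq-divergence-general} applied with the rotation $M_s(x,y)=(\cos(s)x+\sin(s)y,\,-\sin(s)x+\cos(s)y)$ — a genuine $\gamma\otimes\gamma$-measure-preserving map replacing the Euclidean shear $(x,y)\mapsto(x-sy,y)$. The hypothesis $b\in L^p$, $p>1$, is used here to rule out $L\log L$ integrability issues and to invoke \eqref{eq-duality-wiener} freely.

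The anisotropic estimate is the heart of the proof. Expanding $B_\veps-A_\veps$ along the rotation-path $x_s$, exchanging $\dive_x$ with $\dive_y$ via integration by parts in $s$ as in \eqref{time-integral}, and invoking the polar decomposition $M_t(x)\,\abs{Db}(t,x) = Db(t,x)$ with $M_t(x)\in H\otimes H$ furnished by the $BV$-theory of \cite{MR2558177}, one should arrive at
\[ \limsup_{\veps \to 0} \int_0^T \int_X \abs{\varphi \, \beta'(u^\veps_t) r^\veps_\rho} \, d\gamma \, dt \le \nor{u}_\infty \nor{\beta'}_\infty \int \abs{\varphi} \Lambda_\rho \, d\abs{Db}, \]
with
\[ \Lambda_\rho(t,x) = \int_X \abs{\divey{M_t(x) y\, \rho(y)}} \, d\gamma(y). \]
Passing from smooth fields to general $BV\cap L^p$ fields is performed by the Reshetnyak continuity theorem applied in the separable Hilbert target $H\otimes_{HS} H$, together with the density of smooth cylindrical fields in $BV\cap L^p$ recalled in Section \ref{section-wiener}. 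The remaining sum term in the analogue of \eqref{equation-middle-commutator}, involving $\fint_0^\veps \dive b\,ds - \dive b$, is handled as in the Euclidean case by strong continuity of Gaussian translations in $L^1$.

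For the optimization step, the finite-dimensional lemma adapts directly: given any Hilbert-Schmidt $M\in H\otimes H$, choose a smooth cylindrical $u_0\ge 0$ with $\int u_0\,d\gamma = 1$, solve the Gaussian continuity equation $\partial_t u_t + \dive(\hat M u_t) = 0$ with $\hat M(y)=My$, and average on $[0,T]$ to obtain a cylindrical density $\rho_T$ with $\Lambda_{\rho_T}\le 2/T$; the Hilbert-Schmidt assumption is essential to ensure $\dive \hat M\in L^2(\gamma)$. The main obstacle I anticipate is entirely inside the anisotropic estimate: the two Euclidean identities that in Section \ref{finite-dimensional} allow one to rewrite $\dive_x c$ as $-\partial_s \dive b$ and then integrate by parts in $s$ now produce Gaussian remainder terms of schematic form $\rho(y)\ang{x_s, c(x_s,y)}$, and one must prove that these are uniformly controlled in $N$ and $\veps$ so that they vanish in the limit. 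This is precisely the \emph{refined anisotropic estimate} announced in the introduction, and it is the single point where a naive translation of \cite{MR2096794} would produce dimension-dependent error terms and fail.
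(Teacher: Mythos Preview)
Your four-step plan is correct in outline, and you rightly recognise that a $\gamma\otimes\gamma$-preserving rotation must replace the Euclidean shear. But the mollifier you write down, $T^\veps_\rho\varphi(x)=\int\varphi(x-\veps y)\rho(y)\,d\gamma(y)$, is translation-based, and this is inconsistent with your own proposal to use the rotation $M_s$ for the divergence identity: the difference quotient $b(x)-b(x-\veps y)$ lives along the shear path $s\mapsto x-sy$, whereas the rotation identity controls $\divey{v(x_s,y_s)}$ only along the rotation path. The paper's mollifier is instead the Ornstein--Uhlenbeck operator
\[ T^\veps_\rho\varphi(x)=\int\varphi\bra{e^{-\veps}x+\sqrt{1-e^{-2\veps}}\,y}\rho(y)\,d\gamma(y), \]
so that the mollification variable $x_\veps$ is itself on the rotation orbit and the whole machinery---the kernels $A_\veps,B_\veps$, identity \eqref{eq-divergence-wiener}, and the crucial integration by parts in $s$---lines up. With your translation mollifier the Cameron--Martin adjoint also introduces exponential weights that do not cancel and spoil the clean commutator structure.

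Two further ingredients from Section~\ref{section-technical} are missing. First, the passage from smooth to general $BV$ fields cannot go via ``Reshetnyak in $H\otimes_{HS}H$ plus density'': Reshetnyak requires strict convergence $\abs{Db_n}(X)\to\abs{Db}(X)$, which infinite-dimensional cylindrical approximation does not supply directly. The paper proceeds in two stages---first to cylindrical $BV$ fields, where the problem is finite-dimensional and the usual Reshetnyak theorem applies, then from cylindrical to general $b$ via $b^N=\eE_N\sqa{\pi_N b}$ using the Jensen-type one-sided estimate of Proposition~\ref{prop-cyl-approx-2}. Second, in the optimization step the averaged density $\rho_T=\frac{1}{T}\int_0^T u_t\,dt$ is neither cylindrical (for a general Hilbert--Schmidt $M$) nor a priori smooth, so it is not an admissible $\rho$; the paper extends the class of $\rho$ to $\bigcup_{p>1}W^{1,p}(X,\gamma)$ by continuity and then invokes Proposition~\ref{lemma-regularity-exponential} to show $\rho_T\in W^{1,p}$ for some $p>1$. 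This is not a formality: the negative part of $\dive\hat M$ is exponentially integrable only up to exponent $1/2$, so the a priori bounds of \cite{MR2475421} break down for large $T$ and a dedicated argument is required.
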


We remark that the assumption $b \in L^p(X,\gamma;H)$ is made for convenience and the proof of the theorem suggests that it may be removed, exploiting the integrability condition $b \in L \log ^{1/2} L(X,\gamma, H)$: however, we prefer here to avoid the introduction of Orlicz spaces.

As already anticipated in the introduction, following the arguments in Section 4 of \cite{MR2475421}, from the result above one obtains existence (under a stronger bound on the divergence), uniqueness and stability of associated Lagrangian flows: we do not enter into details since this is rather plain.

We end this section with a remark that will be useful later on. A general theory of distributions on $(0,T)\times X$ is possible, by considering linear functionals $L$ on test functions such that, for some $k\ge 0$ and $p \in (1,\infty)$ it holds
\[ \abs{ L\bra{\varphi} } \le C(L) \bra{ \nor{ \varphi}_p +\nor{ \nabla \varphi}_p   +\ldots + \nor{ \nabla ^k \varphi}_p } \period\]
In this sense one can consider $ \frac{d}{dt} u_t$ and $\dive\bra{b_t u_t}$ as distributions. However, assume that for some positive measure $\mu$ on $(0,T)\times X$, a given distribution satisfies
\[ \abs{ L\bra{\varphi} } \le \int \abs{\varphi} d\mu \comma \]
for every test function $\varphi$. Then it must coincide with a measure (actually, the restriction to smooth functions of the integration with respect to a measure), which is unique and absolutely continuous with respect to $\mu$. Indeed, it is sufficient to remark that in such a case $L\bra{\varphi}$ defines a continuous functional on $L^1\bra{(0,T)\times X, \mu}$, on a dense subspace (see also Section 2.1 in \cite{MR2558177}).

\section{An example}\label{section-example}

In this section, we show that $BV$ vector fields arise naturally, even in contexts where higher regularity is expected, thus providing a motivation for our result.

The problem is the following: given a Sobolev field $b \in W^{1,2}(X, \gamma; H)$ with $\dive b \in L^\infty(X, \gamma)$, the results in \cite{MR2475421} entail that a global (i.e.\ on all $X$) Lagrangian flow is defined.  However, it is not clear at all how to prove analog \emph{local} results, e.g.\ if $b$ is regular, or even defined, only in an open regular set $\Omega \subseteq X$: a natural strategy is to consider the field $b \chi_\Omega$, which is expected to be $BV$, and apply our result.

Instead of setting up a general theory, we limit the discussion to a specific example, that of a field $b = \nabla \eta$, where $\eta$ solves the following elliptic problem with Neumann boundary conditions:
\begin{equation}
\label{eq-elliptic}
 \left\{ \begin{array}{ll} -\dive \nabla \eta +\lambda \eta = f & \textrm{in $\Omega$,}\\
\frac{\partial \eta}{\partial n} = 0& \textrm{on $\partial \Omega$.} \end{array} \right.\end{equation}

Equations of this kind naturally arise in infinite dimensional settings as Kolmogorov equations associated to stochastic processes reflected at the boundary of $\Omega$, as investigated e.g.\ in the articles \cite{MR2546750}, \cite{MR2841072}, \cite{MR3055217} and \cite{MR2738922}. The motivation to consider the flow driven by the gradient of the solution is then related e.g.~to the study of critical points of $\eta$: however, our aim here is limited to investigate existence and uniqueness and thus the main difficulty is to establish sufficient regularity for $\nabla \eta$ so that the results above apply.

To this aim, we exploit the results about the regularity of solutions to \eqref{eq-elliptic} obtained in \cite{MR2841072} (case $\alpha = 0$) to which we completely refer for more details. We assume therefore that $X$ is a separable Hilbert space, $\Omega$ is convex and is it also the sublevel set of a sufficiently smooth and non-degenerate function (for the precise assumptions, see Hypothesis 1.1 in \cite{MR2841072}, with $\Omega = \mathcal{K}$). Given any separable Hilbert space $K$, it is then possible to introduce a suitable Hilbert space $W^{1,2}(\Omega, \gamma; K)$ of Sobolev differentiable $K$-valued maps, which is well-defined as the closure of the Sobolev-Malliavin operator $\nabla$, on cylindrical smooth $K$-valued maps defined on all $X$, with respect $L^2(\Omega,\gamma; K)$ convergence. The Hilbertian norm in  $W^{1,2}(\Omega, \gamma; K)$ is given by
\[ \nor{f}^2_{W^{1,2}(\Omega,\gamma; K)} = \int_{\Omega} \abs{f}_{K}^2 + \abs{\nabla f}_{K\otimes H}^2 d\gamma.\]
By construction, smooth $K$-valued maps, defined on all $X$, are dense.

Once this space is built, the notion of solution to \eqref{eq-elliptic} is defined exactly as in the finite-dimensional case. Precisely, any $\eta \in W^{1,2}(\Omega, \gamma)$ such that
\begin{equation}\label{eq-variational}
\int_\Omega \sqa{\ang{\nabla \varphi, \nabla \eta} + \lambda \varphi \eta } d\gamma = \int_{\Omega} f \eta \, d\gamma 
\end{equation}
for every $\varphi \in  W^{1,2}(\Omega, \gamma)$, is said to be a solution of \eqref{eq-elliptic}. If $f \in L^2(\Omega,\gamma)$ and $\lambda >0$, existence and uniqueness for the problem are settled by abstract arguments (Lax-Milgram theorem).

The crucial step is to check whether the solution $\eta$ satisfies the boundary condition, at least in the sense of trace of Sobolev functions. Indeed, the assumptions on $\Omega$ entail that it is a set with finite perimeter and a trace operator is well defined:
\[ T_{\partial \Omega}: W^{1,2}(\Omega, \gamma; K) \to L^2(X, \abs{D\chi_{\Omega}}; K),\]
and extends continuously the identity operator on cylindrical smooth maps.

Theorem 3.5 in \cite{MR2841072}, rephrased in the language of sets with finite perimeters, shows that the unique solution $\eta$ to the problem belongs to the space $W^{2,2}(\Omega, \gamma)$, i.e.\ $\nabla \eta \in W^{1,2}(\Omega, \gamma; H)$ and moreover the trace  $T_{\partial \Omega}\nabla \eta$ is orthogonal to the normal at the boundary,
\begin{equation} \label{neumann} \ang{T_{\partial \Omega}\nabla \eta, \sigma } = 0 \quad \textrm{ $\abs{D\chi_{\Omega}}$-a.e.} \end{equation}
where $\sigma$ is provided by the polar decomposition $\sigma \abs{D\chi_{\Omega}} = D\chi _{\Omega}$.

Assuming these facts, we show now that $b := (\nabla\eta) \chi_{\Omega}$ is a well-defined $H$-valued field such that uniqueness of the flow holds: indeed, $b \in BV\cap L^2(X,\gamma; H) $ with $\dive b \in L^1 (X,\gamma)$. The first assertion follows from the general estimate,
\begin{equation}
\label{eq-estimate-bv} \nor{ v\chi_{\Omega} }_{BV(X,\gamma;K)}  \le C \nor{v}_{W^{1,2}(\Omega,\gamma; K)},\end{equation}
valid for any Hilbert space $K$, which is a consequence of the Leibniz rule, easily proved for cylindrical smooth maps,
\[ D( \varphi \chi_\Omega) = \chi_\Omega  \nabla \varphi \gamma + \varphi D \chi_{\Omega},\]
that entails
\[ \nor{ \varphi \chi_\Omega }_{BV(X,\gamma;K)} \le \nor{ \varphi }_{ W^{1,2}(\Omega,\gamma; K)} + \nor{ \varphi}_{L^1(X, \abs{D\chi_\Omega}; K)}.\]
Then, continuity of the trace operator and density of cylindrical smooth maps in $W^{1,2}(\Omega, \gamma; K)$ give \eqref{eq-estimate-bv}.

To show that $\dive b \in L^1 (X,\gamma)$, we take any smooth function $\varphi$ and compute
\[ \int_X \ang{\nabla \varphi, b} d\gamma = \int_\Omega \ang{\nabla \varphi, \nabla \eta} d\gamma = \int_{\Omega} (f -\lambda\eta) \varphi\]
where we used the fact that smooth functions belong to $W^{1,2}(\Omega,\gamma;H)$ and so \eqref{eq-variational} holds. This entails that
\[ \dive b = (\lambda\eta - f)\chi_\Omega \in L^1(X, \gamma).\]

This is sufficient to obtain uniqueness in $L^\infty ((0,T) \times X)$ for the continuity equation associated to $b$ and therefore for Lagrangian flows driven by $b$. To ensure existence, the results in \cite{MR2475421} require  more integrability on $\dive b$, which we obtain, by a comparison principle, under the additional assumption that $f \in L^\infty (\Omega, \gamma)$.

More precisely, if $f \le M \in \rnum$, $\gamma$-a.e.\ in $\Omega$, we conclude that $\lambda \eta \le M$. This follows from  taking $v = \max\cur{ \eta, M} - M$ in \eqref{eq-variational}: arguing by density, it holds $v \in W^{1,2}\bra{\Omega,\gamma}$ with $\nabla v = (\nabla \eta)\chi_{\Omega \cap \cur{\lambda \eta < M }}$. We conclude that
\[ \int _{\Omega \cap \cur{\lambda \eta \ge M} } \abs{\nabla u}^2 \le \int_{\Omega} \bra{f - \lambda \eta} v \le 0\]
which gives  that $\eta \le M$ $\gamma$-a.e. Similarly one shows that $\eta \ge - \nor{f}_\infty$ and so the Lagrangian flow associated to $b$ is well-posed.

\section{Some technical results}\label{section-technical}

Before we address the proof of Theorem \ref{main-theorem}, in this section we prove some auxiliary facts, related to approximations of fields and exponential maps in Wiener spaces. 

\subsection{Cylindrical approximations} We establish two propositions: the first one being a slight generalization of the approximation procedure employed in the proof of Proposition 3.5 in \cite{MR2475421}.

Recall that, in Section \ref{section-wiener}, we introduced an orthonormal basis in $H$ of the form  $\bra{h_n = Q x^*_n}_{n\ge1}$ and related projections operators $\pi_N$: we let in all what follows $\cF_N$ be the $\sigma$-algebra generated by the map $\pi_N$ and let $\eE_N$ be the conditional expectation operator with respect to $\cF_N$.

Moreover, the map $x \mapsto \bra{ \pi_N \bra{x}, x - \pi_N \bra{x} }$ induces decompositions $X = Im \pi_N \oplus Ker \pi_N$ and $H =Im \pi_N \oplus Im\pi_N^{\perp}$. Recall that we tacitly identify $Im \pi_N = \rnum^N$ via  $h_i \mapsto e_i$.  The same map induces a decomposition $\gamma = \gamma_N \otimes \gamma_N^\perp$, where $\gamma_N$ is the standard $N$-dimensional normal law on $\rnum^N$ and $\gamma_N^\perp$ is  a non-degenerate Gaussian measure on $Ker\pi_N$, with Cameron-Martin space given by $ Im\pi_N^{\perp}$.

Given any Hilbert space $K$ and a $K$-valued measure $\mu$ on $X$, its push-forward $\bra{\pi_N}_\sharp \mu$ is defined by
\[ \bra{\tilde{\pi}_N}_\sharp \mu \bra{A} = \mu \bra{\pi_N^{-1} A}\period \]
Notice that push-forwards of $K$-valued measures commute with linear operators on $K$, so that in particular, for any $H\otimes H$-valued measure, it holds \[ \pi_N \otimes \pi_N \mu \bra{A} = \sqa{\pi_N \mu \pi_N } \bra{A} =\pi_N \mu\bra{A} \pi_N \]

In the next propositions, we fix any $b \in BV\cap L^p\bra{X,\gamma; H}$ with $\dive b \in L^q\bra{\gamma}$, $p,q \in [1,\infty[$, and for any $N\ge1$, we let
\[ b^N =  \eE_N\sqa{ \pi_N b} \]
be a cylindrical approximation of $b$.

\begin{proposition}\label{prop-cyl-approx}
Let $b$ and $b^N$ be defined above: then, $b^N$ is a cylindrical $BV$ vector field, with \[D b^N = \sqa{\bra{\pi_N}_\sharp \bra{\pi_N\, Db \,\pi_N} } \otimes \gamma_N^\perp\quad \textrm{and} \quad \dive b^N =\eE_N\sqa{ \dive b}.\]
It holds moreover
\[ \lim_{N\to \infty} \nor{b^N - b}_{p} + \nor{\dive b^N - \dive b}_{q} = 0 \period\]
\end{proposition}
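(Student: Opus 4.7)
The divergence identity is the quickest part. Writing $b=\sum_i b_i h_i$, since $b^N$ is $\cF_N$-measurable and takes values in $\pi_N(X)$, testing against any smooth cylindrical function $\phi$ and using the commutation $\eE_N\sqa{\partial_i\phi} = \partial_i\eE_N\sqa{\phi}$ for $i\le N$ together with the self-adjointness of $\eE_N$ one computes
\[
 \int \ang{\nabla\phi, b^N}_H\, d\gamma = \sum_{i=1}^N \int \partial_i\phi\,\eE_N\sqa{b_i}\, d\gamma = \int \ang{\nabla\eE_N\sqa{\phi}, b}_H\, d\gamma = -\int \phi\, \eE_N\sqa{\dive b}\, d\gamma,
\]
since $\eE_N\sqa{\phi}$ is again smooth cylindrical and $\dive b\in L^q$. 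This simultaneously shows that $b^N$ is cylindrical and that $\dive b^N = \eE_N\sqa{\dive b}$. The $L^p$- and $L^q$-convergences follow from the triangle inequality $\nor{b^N - b}_p \le \nor{\pi_N b - b}_p + \nor{\eE_N b - b}_p$, combined with dominated convergence applied to $\pi_N h\to h$ in $H$ and vector-valued martingale convergence.

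To identify $Db^N$, I would use the approximation characterization (Theorem~4.1 in \cite{MR2558177}): pick smooth cylindrical fields $b_k$ with $\nor{b_k - b}_1\to 0$ and $\abs{Db_k}(X)\to \abs{Db}(X)$, so that $\nabla b_k\,\gamma \rightharpoonup^* Db$ weakly-$*$. Componentwise application of the scalar identity $\nabla\eE_N\sqa{f} = \eE_N\sqa{\pi_N\nabla f}$ gives, viewing $\nabla b_k$ as an $H\otimes H$-valued function,
\[ \nabla b_k^N = \eE_N\sqa{\pi_N(\nabla b_k)\pi_N}. \]
The elementary disintegration identity $\eE_N\sqa{F}\gamma = \bra{\pi_N}_\sharp(F\gamma)\otimes\gamma_N^\perp$, valid for any $F\in L^1(X,\gamma;H\otimes H)$, rewrites this as
\[ \nabla b_k^N\,\gamma = \sqa{\bra{\pi_N}_\sharp(\pi_N\nabla b_k\,\pi_N\,\gamma)}\otimes \gamma_N^\perp. \]
Passing to the weak-$*$ limit $k\to\infty$, and using weak-$*$ continuity of the operations $\pi_N\otimes\pi_N$, of $\bra{\pi_N}_\sharp$, and of tensoring with $\gamma_N^\perp$ on bounded measures, the right-hand side converges to $\sqa{\bra{\pi_N}_\sharp(\pi_N Db\,\pi_N)}\otimes\gamma_N^\perp$. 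Combined with $b_k^N\to b^N$ in $L^1$, the $BV$-approximation characterization identifies $Db^N$ with the stated measure.

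The main technical obstacle is justifying the commutation $\nabla b_k^N = \eE_N\sqa{\pi_N(\nabla b_k)\pi_N}$ for smooth cylindrical $b_k$ and the factorization $\eE_N\sqa{F}\gamma = \bra{\pi_N}_\sharp(F\gamma)\otimes\gamma_N^\perp$; once these are in place, the statement follows from the $BV$-approximation criterion together with standard martingale considerations.
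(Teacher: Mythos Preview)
Your treatment of the divergence identity and of the $L^p$/$L^q$ convergence is essentially the paper's: both use self-adjointness of $\eE_N$ together with the commutation $\nabla \eE_N[\phi] = \eE_N[\pi_N \nabla \phi]$, and both appeal to martingale convergence (plus the easy dominated-convergence argument for $\pi_N b \to b$).

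For the identification of $Db^N$, however, you take a genuinely different route. The paper argues entirely by duality: it tests $b^N$ against $\dive\varphi$ for a smooth $H\otimes H$-valued $\varphi$, moves $\eE_N$ and $\pi_N$ across the pairing using $I\otimes \pi_N\,\eE_N[\nabla\varphi] = \nabla(I\otimes\pi_N\,\eE_N[\varphi])$, first establishes $D(\pi_N b) = (\pi_N\otimes I)Db$, and then reads off $\int\langle\eE_N[\varphi],(\pi_N\otimes\pi_N)\,dDb\rangle$ directly. No smooth approximation of $b$ and no weak-$*$ limits are needed. Your approach instead passes through the approximation characterization of $BV$: you approximate $b$ by smooth $b_k$, compute $\nabla b_k^N$ explicitly via the smooth commutation identity, rewrite it using the disintegration $\eE_N[F]\gamma = (\pi_N)_\sharp(F\gamma)\otimes\gamma_N^\perp$, and pass to the weak-$*$ limit. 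This is correct---the operations $\pi_N\otimes\pi_N$, $(\pi_N)_\sharp$ and $\cdot\otimes\gamma_N^\perp$ are all weak-$*$ continuous on bounded measures, and $\nabla b_k\,\gamma \rightharpoonup^* Db$ follows from $b_k\to b$ in $L^1$ together with the very definition of $Db$ by duality---but it is more circuitous and requires you to justify the two ``technical obstacles'' you flag. The paper's direct duality computation sidesteps both obstacles and delivers the formula in a couple of lines; what your approach buys is perhaps a more concrete, constructive picture of $Db^N$ as a limit of smooth gradients, which may feel closer to the finite-dimensional intuition.
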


\begin{proof}
Once the identity involving the divergence is proved, the last statement follows at once by the Martingale convergence theorem, or because conditional expectations are contractions and convergence is true for cylindrical fields.

Therefore, we focus on the two identities: being $N$ fixed we omit to write it, so that $\pi = \pi_N$ and $\eE = \eE_N$.

Notice that the field $b^N$ is at least as integrable as $b$, since projections and conditional expectations reduce norms. In what follows, we use duality with smooth cylindrical functions, self-adjointness of $\pi$ and $\eE$, and the commutation relation
\[ \pi \eE \sqa{\nabla \varphi}  = \eE \sqa{\pi \nabla \varphi} = \nabla \eE \sqa{\varphi} \period \]

It holds
\[ \begin{split} \int \varphi \, \dive b^N d\gamma &= - \int \ang{ \nabla \varphi, b^N } d\gamma = - \int \ang{\pi \eE\sqa{\nabla \varphi}, b} d\gamma = \\ &= - \int \ang{ \nabla \eE\sqa{ \varphi }, b } = \int \varphi \, \eE\sqa{ \dive b }\comma \end{split} \]
that gives the identity for the divergence.

Similar computations can be performed on $H\otimes H$ smooth maps with $I\otimes \pi$ ($I$ denotes the identity map) in place of $\pi$. First, we prove that $D \bra{\pi b} = \bra{\pi \otimes I } D b$:
\[ \int \ang{\varphi,d D \pi b } = - \int \ang{ \pi \dive \varphi , b}  =- \int \ang{ \dive\bra{  \pi\otimes I \varphi} , b} =  \int \ang{\varphi, \bra{\pi \otimes I } d D b } \comma \]
and so we conclude that
\[ \begin{split}  \int \ang{\dive \varphi , b^N} & = \int \ang{ \eE_N \sqa{ \dive \varphi }, \pi b } = \int \ang{      \dive\bra{ I\otimes \pi \eE_N\sqa{ \varphi} }, \pi b } =\\ & = \int \ang{  I \otimes \pi \eE_N\sqa{  \varphi }, d D \pi \ b } = \int \ang{ \eE_N\sqa{ \varphi } , \bra{\pi \otimes \pi} d Db }\period \end{split} \]
\end{proof}



The next result is actually mostly measure-theoretical and its proof is based on a disintegration of measures and an application of Jensen's inequality. Notice that it proves and generalize the inequality
\[ \abs{Db^N}\bra{X} \le \abs{Db}\bra{X} \period\]

\begin{proposition}\label{prop-cyl-approx-2}
Let
\[ f: X \times \bra{H \otimes H} \to [0,\infty[ \]
be Borel, positively homogeneous and convex in the second variable, keeping fixed the first. For any $N \ge 1$, it holds
\[ \int f\bra{\pi_N, \frac{Db^N}{\abs{Db^N} }   } d\abs{Db^N} \le \int f\bra{\pi_N , \pi_N \frac{Db}{\abs{Db} }  \pi_N } d\abs{Db} \]
\end{proposition}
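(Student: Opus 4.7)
The plan is to reduce the inequality to a general Jensen-type estimate for push-forwards of vector-valued measures, exploiting the explicit structure of $Db^N$ given by Proposition \ref{prop-cyl-approx}. Setting $\mu := \pi_N\, Db\, \pi_N$ and $\nu := \bra{\pi_N}_\sharp \mu$ on $\mathrm{Im}\,\pi_N$, we have $Db^N = \nu \otimes \gamma_N^\perp$ and therefore $\abs{Db^N} = \abs{\nu} \otimes \gamma_N^\perp$; moreover the density $dDb^N/d\abs{Db^N}$ depends on $x$ only through $\pi_N(x)$, as does the first argument of $f$. Since $\gamma_N^\perp$ is a probability measure, Fubini collapses the LHS to
\[ \int_{\mathrm{Im}\,\pi_N} f\bra{y, \frac{d\nu}{d\abs{\nu}}(y) } d\abs{\nu}(y). \]

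The core step is then the following push-forward Jensen inequality, which I would isolate as a self-contained measure-theoretic lemma: if $T\colon X \to Y$ is Borel between Polish spaces and $\mu$ is a finite $K$-valued Borel measure on $X$ with polar decomposition $\mu = m \cdot \abs{\mu}$, then for every Borel $f\colon Y \times K \to [0,\infty[$ positively homogeneous and convex in the second variable,
\[ \int_Y f\bra{y, \frac{d(T_\sharp\mu)}{d\abs{T_\sharp\mu}} } d\abs{T_\sharp \mu}(y) \le \int_X f\bra{T(x), m(x)} d\abs{\mu}(x). \]
The argument is: disintegrate $\abs{\mu} = \int_Y \lambda_y \, d(T_\sharp \abs{\mu})(y)$ along $T$ (standard, since $X$ is Polish and $\abs{\mu}$ finite), obtaining $T_\sharp \mu = \tilde m \cdot T_\sharp \abs{\mu}$ with $\tilde m(y) := \int m\,d\lambda_y$; by positive homogeneity the LHS equals $\int_Y f\bra{y, \tilde m(y)} d(T_\sharp\abs{\mu})(y)$, and Jensen's inequality applied to the convex map $v \mapsto f(y,v)$ under the probability $\lambda_y$, combined with $T(x) = y$ for $\lambda_y$-a.e.~$x$, yields the claim.

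Applying the lemma with $T = \pi_N$ and $\mu = g \cdot \abs{Db}$, where $g(x) := \pi_N \frac{Db}{\abs{Db}}(x) \pi_N$, its right-hand side reads $\int_X f\bra{\pi_N(x), g(x)/\abs{g(x)}}\abs{g(x)} d\abs{Db}(x)$, which by positive homogeneity collapses to $\int_X f\bra{\pi_N, \pi_N \frac{Db}{\abs{Db}}\pi_N} d\abs{Db}$, matching the RHS of the proposition. I do not anticipate any serious obstacle: the mild care points are the routine existence of the disintegration in the Polish/finite Borel setting, and the handling of the null sets $\cur{g = 0}$ and $\cur{\tilde m = 0}$, where positive homogeneity gives $f(y, 0) = 0$ and allows a continuous extension of the integrand.
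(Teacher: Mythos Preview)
Your proposal is correct and follows essentially the same route as the paper: both reduce the statement, via the factorization $Db^N = \bra{\pi_N}_\sharp(\pi_N Db\,\pi_N) \otimes \gamma_N^\perp$ and Fubini, to a Jensen-type inequality for the push-forward of a vector measure, proved by disintegrating the total variation along $\pi_N$ and applying Jensen fiberwise, with positive homogeneity handling the passage between $\abs{T_\sharp\mu}$, $T_\sharp\abs{\mu}$, and $\abs{Db}$. Your packaging of the core step as a stand-alone push-forward lemma is cleaner than the paper's inline computation, but the underlying argument is the same.
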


\begin{proof}
We omit the dependence upon $N$ and write for brevity $\pi = \pi_N$. We let also $\mu = \pi\otimes \pi Db$, $\nu = Db^N$ and $\rho = \gamma_N^\perp$ so that we rewrite Proposition \ref{prop-cyl-approx} as $\nu = \bra{\pi_\sharp \mu } \otimes \rho$.
The total variation and the polar decomposition of $\nu$ factorize as
\[ \abs{\nu}\bra{dx,dy} = \abs{\pi_\sharp \mu  }\bra{dx} \otimes \rho \bra{dy} \quad \textrm{and} \quad \frac{\nu}{\abs{\nu}}\bra{x,y} = \frac{\pi_\sharp \mu }{\abs{ \pi_\sharp \mu}}\bra{x} \period \]
Therefore, it holds
\[  \int f\bra{x, \frac{\nu}{\abs{\nu} } \bra{x,y}   } d\abs{\nu\bra{x,y}} = \int f \bra{x,\frac{ \pi_\sharp \mu }{\abs{   \pi_\sharp \mu}} \bra{x} } \abs{  \pi_\sharp \mu  }\bra{dx} \period \]
Since $\abs{\pi_\sharp \mu} \le \pi_\sharp\abs{\mu}$, it holds $\frac{\pi_\sharp \mu}{  \pi_\sharp \abs{\mu}} = {\frac{\pi_\sharp \mu} {\abs{ \pi_\sharp \mu} } }  {\frac {\abs{ \pi_\sharp \mu} }{   \pi_\sharp \abs{\mu}} }$ 
which, by positive homogeneity of $f$, gives
\[ \int f \bra{x,\frac{ \pi_\sharp \mu }{\abs{   \pi_\sharp \mu}} \bra{x} } \abs{  \pi_\sharp \mu  }\bra{dx} = \int f \bra{x,\frac{ \pi_\sharp \mu }{   \pi_\sharp \abs{\mu}} \bra{x} }  \pi_\sharp \abs{\mu  }\bra{dx} \]
We now disintegrate $\abs{\mu}$ with respect to $\pi$, and apply Jensen's inequality. More precisely, since $X$ is a separable Banach space, there exists a probability kernel $N\bra{x,y}$ such that, for every bounded Borel map $g\bra{z}$ it holds
\[ \int g\bra{z} d \abs{\mu}\bra{z} = \int \pi_\sharp \abs{\mu} \bra{dx} \int  g\bra{x,y} N\bra{x,dy} \period\]
Moreover, if  $\sigma \abs{\mu} = \mu$ is the polar decomposition, using $g\bra{z} = h\bra{\pi\bra{z}} \sigma\bra{z}$, we obtain the identity
\[ \frac{\pi_\sharp \mu }{ \pi_\sharp \abs{\mu}}\bra{x} =  \int  \sigma\bra{x,y} N\bra{x,dy} \period \]
By Jensen's inequality, 
\[ f \bra{x,\frac{ \pi_\sharp \mu }{   \pi_\sharp \abs{\mu}} \bra{x} }  \le \int f\bra{x, \sigma\bra{x,y}}   N\bra{x,dy}\period\]
Integrating with respect to $\pi_\sharp \abs{\mu}$, the rhs above gives
\[ \int f\bra{\pi\bra{z}, \sigma\bra{z}} d \abs{\mu}\bra{z}  =  \int f\bra{\pi\bra{z}, \frac{\mu}{\abs{\mu}} \bra{z}} d \abs{\mu}\bra{z} \comma  \]
The homogeneity of $f$ and the identities
\[  \frac{\pi Db \pi }{\abs{\pi Db \pi} }\,  \frac{\abs{\pi Db \pi}} { \abs{ Db } } = \frac{\pi Db \pi } {\abs{Db} } = \pi \frac{Db}{\abs{Db} } \pi \comma\]
allow to conclude.
\end{proof}

\subsection{Exponentials maps in Wiener spaces}

It is well known that the Cameron-Martin space $H\subseteq X$ is isomorphic to a subspace  $\mathcal{H} \subset L^2\bra{X,\gamma}$ via
\[ h \mapsto \hat{h} = -\dive h \in L^{2}(X,\gamma)\period\]
Notice that the divergence of a constant field is not zero, because the Gaussian measure is not invariant under translations.

We introduce the following notation: given $b\in L^1\bra{X,\gamma; H}$, we define $\hat{b} \in L^{1}\bra{X\times X, \gamma^2}$ by $\hat{b}\bra{x,y} = -\divey{b\bra{x}} \bra{y}$. This provides an embedding $L^1\bra{X, \gamma; H} \subseteq L^1\bra{X\times X, \gamma^2}$.

In similar direction, given a Hilbert-Schmidt operator $M \in H\otimes H$, we let $\hat{M} = -\dive M \in L^{2}\bra{X,\gamma;H}$. On cylindrical operators of the form $M = \sum m_{ij} h_i \otimes h_j$, it holds
\begin{equation}\label{matrix} \hat{M}\bra{x} = \sum_{i} h_i  \sum_j m_{ij} x_j^*\bra{x}  \period\end{equation}
In particular, it holds $||  \hat{M} || _{2} = \abs{M}$ and $\nabla \hat{M}  = M$. 

In this section, we focus on integrability result for the solution of a continuity equation driven by $\hat{M}$
\begin{equation}\label{continuity-exponential}\frac{d}{dt} u_t + \dive (\hat{M} u_t) = 0, \quad u_0 =1\end{equation}

Since $M$ is regular and integrable, the theory developed in \cite{MR2475421} provides existence, uniqueness and stability for solutions up to a time $T$ which depends on the exponential integrability of $\dive \hat{M}$. Looking for  integral bounds on $u_T$ for any $T>0$ (but fixed), already in the case $M = h_i \otimes h_i$, one finds \[ \dive\hat{ M } = \abs{h_i}^2 - \hat{h}_i^2\comma\] whose negative part is exponentially integrable only up to a factor $\alpha < 1/2$ and so the bound developed in \cite{MR2475421} does not help. The following proposition provides, for every $T > 0$,  $L^p$-bounds for $u_T$ and $\abs{\nabla u_T}$ for some some $p(T)>1$. Although the proof makes explicit use of the exponential form of solutions, the key ingredient is a well-known consequence of the so-called concentration of measure, and we claim (but not prove here) that one could prove results of this kind for rather general $H$-Lipschitz fields.

\begin{proposition}\label{lemma-regularity-exponential}
Let $M \in H\otimes H$. Then, for every $T>0$ there exists some $p(T, \nor{M})>1$ such that \eqref{continuity-exponential} admits a (unique) solution $u \in L^\infty \bra{ (0,T); W^{1,p} \bra{\gamma}}$.
\end{proposition}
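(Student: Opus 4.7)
The plan is to exploit the linear structure of $\hat{M}$ in order to write $u_t$ explicitly as a Gaussian density, and then deduce $L^p$-integrability from the standard bound on exponential moments of quadratic forms on Gaussian space.

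First, I would identify the flow generated by $\hat{M}$. By \eqref{matrix}, along a characteristic $\dot{x}_t = \hat{M}(x_t)$ the coordinates $\xi_i(t) := x_i^*(x_t)$ satisfy the autonomous linear ODE $\dot \xi = M \xi$, whence $\xi(t) = e^{tM}\xi(0)$. The flow $\phi_t$ therefore acts on coordinates by the matrix $e^{tM}$: identifying $x \in X$ with the sequence $(x_i^*(x))_i$, one has $\phi_t(x) - x = \sum_i h_i \bigl((e^{tM}-I)\xi(x)\bigr)_i$, a series which converges in $X$ for $\gamma$-a.e.~$x$ since $e^{tM}-I$ is Hilbert--Schmidt on $H$. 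Because $\hat M$ is a smooth Sobolev field with $\dive \hat M$ in every $L^p(\gamma)$, the flow theory of \cite{MR2475421} identifies the unique distributional solution of \eqref{continuity-exponential} (in a suitable regularity class) as the push-forward density $u_t = d(\phi_t)_\sharp \gamma/d\gamma$.

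Next, I would compute this density explicitly. Because $\phi_t$ is, in coordinates, a linear Hilbert--Schmidt perturbation of the identity, the Ramer/Cameron--Martin--Girsanov change-of-variables formula produces an expression of the form
\[ u_t(x) = C_t\,\exp\bigl(\tfrac12 \langle x, B_t x\rangle\bigr)\comma \]
where $B_t := I - \Sigma_t^{-1}$ is a symmetric Hilbert--Schmidt operator on $H$ (with $\Sigma_t := e^{tM}(e^{tM})^*$ the covariance of $(\phi_t)_\sharp \gamma$), the quadratic form is interpreted as the corresponding second-Wiener-chaos variable, and $C_t > 0$ is a Carleman--Fredholm-determinant normalizing constant. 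These objects are well defined because $\Sigma_t - I$, and therefore $B_t$, is Hilbert--Schmidt; their relevant norms are controlled on $[0,T]$ in terms of $T$ and $|M|$.

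Finally, I would invoke the classical Gaussian moment estimate: $\exp(\tfrac p 2 \langle x, B x\rangle) \in L^1(\gamma)$ precisely when $I - pB$ is strictly positive on $H$, i.e.\ when every positive eigenvalue of $B$ is strictly less than $1/p$. The largest positive eigenvalue of $B_t$ equals $1 - \|\Sigma_t\|_{\mathrm{op}}^{-1}$, which is strictly less than $1$ since $\|\Sigma_t\|_{\mathrm{op}} \le e^{2T|M|} < \infty$ uniformly on $[0,T]$. Choosing $p = p(T,|M|) > 1$ small enough that $p\bigl(1 - \|\Sigma_T\|_{\mathrm{op}}^{-1}\bigr) < 1$ therefore yields $\sup_{t\in[0,T]} \|u_t\|_p < \infty$. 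Direct differentiation of the explicit formula gives $\nabla u_t = u_t\,B_t x$ as an $H$-valued function of $x$, and since $x \mapsto B_t x$ is a Gaussian $H$-valued map with moments of every order, H\"older's inequality delivers $\sup_{t\in[0,T]} \|\nabla u_t\|_{p'} < \infty$ for some $p' > 1$, possibly after slightly shrinking $p$. The main obstacle is the rigorous derivation of the explicit density formula in the abstract Wiener setting: one must justify the Carleman--Fredholm determinant normalization and the second-chaos interpretation of the quadratic exponent, because $\Sigma_t - I$ is merely Hilbert--Schmidt and not trace class. Once this formula is in hand, the $L^p$-bound reduces to the routine computation of the Gaussian integral of an exponential of a Hilbert--Schmidt quadratic form.
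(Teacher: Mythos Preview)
Your approach is correct and differs from the paper's mainly in how the $L^p$ bound is reached. The paper reduces immediately to $(\rnum^N,\gamma_N)$ and seeks dimension-free estimates: it writes $u_t(X(t,x))$ via the finite-dimensional change-of-variables formula with the Carleman--Fredholm determinant, bounds the determinant in terms of $\nor{M}_{HS}$, and then estimates $\int u_t^p$ by a Cauchy--Schwarz manipulation that reduces everything to the exponential integrability of $x\mapsto\abs{E_t x}^2$ (with $E_t=e^{tM}-I$), obtained via concentration of measure for $H$-Lipschitz maps (Theorem~4.5.7 in \cite{MR1642391}). Your route instead recognizes $u_t$ directly as a Gaussian density $C_t\exp\bigl(\tfrac12\ang{x,B_t x}\bigr)$ with $B_t=I-\Sigma_t^{-1}$ Hilbert--Schmidt, and invokes the spectral criterion for integrability of $\exp\bigl(\tfrac p2\ang{x,Bx}\bigr)$. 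Both are valid; your argument is more structural and gives an explicit threshold for $p$, while the paper's sidesteps precisely the obstacle you flag---the Ramer-type formula in the abstract Wiener setting---by doing the computation in $\rnum^N$ and passing to the limit via cylindrical approximation. If you wish to avoid that machinery as well, you can run your eigenvalue argument in finite dimensions, where the density formula is elementary, and simply observe that the resulting bound on $p$ is dimension-free.
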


\begin{proof}
It is sufficient to assume that $X = \rnum^N$, $\gamma = \gamma_N$ is a standard Gaussian and $M$ is a square matrix, provided we obtain bounds that are independent of $N$, the general case following by cylindrical approximation. 

Recall that the Hilbert-Schmidt norm is stronger than the usual operator norm, that $\nor{AB}_{HS} \le \nor{A}_{HS} \nor{B}$, where $B$ is the usual operator norm and finally that the products of Hilbert-Schmidt operators have finite trace: $Tr\sqa{A^2} \le \nor{A}_{HS}^2$.

Notice also that identity \eqref{matrix} shows that $\hat{M}$ is the linear operator given by matrix multiplication.

We rewrite a linear change of variables in a convenient way (see also Chapter 10 in \cite{MR1439752}). If $C$ is any square matrix in $\rnum^N$, the following identity holds true:
\begin{equation}\label{change-variables} \frac{d(I + C)_\sharp(\gamma_N)}{d\gamma_N} \bra{x + Cx} = \abs{\det\phantom{}_2 \bra{I + C} }^{-1} \exp\sqa{ \dive(Cx) + \abs{Cx}^2/2} \period\end{equation}
where  $\det\phantom{}_2 \bra{I + C}= \det\bra{I+ C} \exp\cur{ - Tr\sqa{C} }$ is the Carleman-Fredholm determinant. As a consequence,
\begin{equation}\label{change-variables-corollary}\int \abs{\det\phantom{}_2 \bra{I + C} } \exp\sqa{ -\dive\bra{Cx} - \abs{Cx}^2/2} d\gamma_N \bra{x} = 1 \end{equation}

In the finite dimensional setting, the unique solution of \eqref{continuity-exponential} is well-known to be $u_t \gamma_N = X(t,\cdot)_\sharp \gamma_N$, where $X(t,x)$ is the classical exponential flow,
\[ X(t,x) = \exp\bra{tM}x = x + E_t x \comma\]
where we write $E_t = \sum_{k=1}^\infty (tM)^k/k!$, because in this form we may apply \eqref{change-variables} and obtain
\begin{equation}\label{eq-exponential-law} u_t(X(t,x)) = \abs{\det\phantom{}_2 \bra{I + E_t }}^{-1} \exp\sqa{ \dive\bra{ E_tx} + \abs{E_t x}^2/2} \period \end{equation}
We compute first the determinant, that gives
\[ \det\phantom{}_2 \bra{I + E_t } = \exp\bra{Tr \sqa{ tM - E_t} } = \exp\bra{Tr\sqa{ (tM)^2 \sum_{k=0}^\infty (tM)^k/(k+2)!  } } \]
and estimate the trace,
\[ \abs{Tr\sqa{ (tM)^2 \sum_{k=0}^\infty (tM)^k/(k+2)!  }} \le t^2 \nor{M}_{HS}^2 \exp\bra{ t \nor{M}_{HS}} \comma\]
so that the determinant is bounded below and above.

We compute then
\begin{equation} \label{eq-ut-p}\int u_t^p = \int \abs{\det\phantom{}_2 \bra{I + E_t }}^{-{p-1}} \exp\sqa{ (p-1)\dive\bra{ E_tx} + (p-1)\abs{E_t x}^2/2}\period \end{equation}

We add and subtract a term $(p-1)^2\abs{E_t x}^2$ inside the exponential, apply Cauchy-Schwartz and \eqref{change-variables-corollary}, so that we see that we need to estimate only
\[ \int \exp\sqa{ 2(p-1)^2\abs{E_t x}^2  + (p-1)\abs{E_t x}^2} d\gamma_N\]
since all the determinant terms that appear are bounded (arguing as above).

Exponential integrability of $(p-1)(2p-1)\abs{E_t x}^2$ follows from the fact that $x \mapsto E_tx$ is Lipschitz, with constant bounded by $t \nor{M}_{HS} \exp\bra{t \nor{M}}$. If $T>0$ is kept fixed, we may consider $p = 1 + \veps$, with $\veps$ so small that Theorem 4.5.7 in \cite{MR1642391} applies providing a right bound, which does not depend on the dimension of the space.

To obtain bounds on the gradient $\nabla u_t$, we notice that \eqref{eq-exponential-law} gives
\[ u_t(y)= \abs{\det\,_2 \bra{I + E_t }}^{-1} \exp\sqa{ \dive E_t( \exp\bra{-tM} y)  + \abs{E_t \exp\bra{-tM} y}^2/2} \period\]
Differentiating with respect to $y$, we obtain
\[  \nabla u_t(y)= u_t(y) \nabla \sqa{ \dive E_t( \exp\bra{-tM} y)  + \abs{E_t \exp\bra{-tM} y}^2/2}\]
Since we already have a bound on $u_t$, it is sufficient to bound the gradient terms, but these are all linear expressions in $y$, which can be explicitly computed and bounded in every $L^p$ space ($p<\infty$) with some constant depending on $p$, $T$ and $\nor{M}$ only. 

\end{proof}

\section{Proof of Theorem \ref{main-theorem}}\label{section-proof}

The line of reasoning mirrors that of the proof of Theorem \ref{theorem-1}, but details are rather different. Therefore, we split this section in steps corresponding to those stated there.

\subsection{Mollification}

We let $\rho$ be any cylindrical smooth function on $X$ and introduce a modified Ornstein-Uhlenbeck, acting only on the space variables,
\[ T^\veps_\rho \varphi \bra{t,x} = \int \varphi\bra{t,\xe} \rho \bra{y} d\gamma\bra{y}\comma \]
where we write, here and in what follows,
\[ \xe = e^{-\veps} x + \sqrt{ 1- e^{-2\veps}} y, \quad \ye =  - \sqrt{ 1- e^{-2\veps}} x + e^{-\veps} y\period\]
Its adjoint (in $L^2((0,T)\times X)$) is given by
\[ \bra{ T^\veps_\rho}^* \varphi_t\bra{x} = \int \varphi\bra{t,x^\veps} \rho\bra{y^\veps} d\gamma\bra{y}\comma \]
where we write, here and in what follows,
\[ x^\veps = e^{-\veps} x - \sqrt{ 1- e^{-2\veps}} y \comma \quad y^\veps = \sqrt{ 1- e^{-2\veps}} x + e^{-\veps} y\period\]
These operators preserve test functions and so we may also define $(T^\veps_\rho)^*$ on distributions, by duality
\[ \ang{\varphi, \bra{T^\veps_{\rho}}^* L} = \ang{T^\veps_{\rho}\varphi, L} \period\]
We let finally $u_\rho^\veps = (T_\rho^\veps)^*u$, so that
it holds
\[ (r_\rho^\veps)_t = \dive\bra{ b (u_\rho^\veps)_t} - (T_\rho^\veps)^*\dive\bra{b u_t}  \period\]

\subsection{Approximate renormalization}\label{approximate-renormalization-wiener}
To keep notation simple, we frequently omit here and below the dependence on $t$ and $\rho$, since they play no role.

As $u \in L^\infty$, an integration by parts shows that $u^\veps$ belongs to every Sobolev space $W^{1,p}(X,\gamma)$ with respect to the space variables, so that the only thing to be proved to justify the usual calculus rules that we perform in this step, is that the distribution $r^\veps$ is (induced by) an integrable function and so it is enough to prove that both $\dive\bra{b u^\veps}$ and $\bra{T^\veps}^*\dive\bra{b u}$ are integrable functions (this is a standard argument, compare with Lemma 3.6 and the computations in Theorem 3.7 in \cite{MR2475421}). We do not enter into details, but to actually perform the computations to get the approximate renormalization we also use the extra integrability assumption $b \in L^p(X,\gamma; H)$.

\subsubsection{Equivalent expressions for $\dive\bra{b u^\veps}$ and $\bra{T^\veps}^*\dive\bra{b u}$ via integration by parts}

Here and in what follows, the term $C_\veps =  e^{\veps} \sqrt{1-e^{-2\veps}}$ frequently appears due to various integration by parts. Notice also that $C_\veps \sim \sqrt{\veps}$ as $\veps \to 0$.

Let $\varphi$ be a test function and compute
\[ T^\veps_\rho \ang{ \nabla  \varphi, b} \bra{x} = -\frac {1}{C_\veps} \int \varphi\bra{x_\veps} \divey{ e^\veps b\bra{x_\veps} \rho\bra{y} } = \int \varphi\bra{x_\veps} A_\veps\bra{x,y} d\gamma\bra{y} \comma\]
and
\[ \ang{ \nabla  T^\veps_\rho \varphi, b} \bra{x} = -\frac {1}{C_\veps} \int \varphi\bra{x_\veps} \divey{ b\bra{x} \rho\bra{y} } d\gamma\bra{y}  =  \int \varphi\bra{x_\veps} B_\veps\bra{x,y}\period\]

We show that $A_\veps$ and $B_\veps$ are integrable and then change variables $\bra{x,y} \mapsto \bra{x^\veps, y^\veps}$ to conclude, as in the finite dimensional case.

\subsubsection{Integrability of $A_\veps$ and $B_\veps$ via divergence identities}

By rotational invariance of the Gaussian measures, an analogue of identity \eqref{eq-divergence-general} holds true in the Wiener spaces, for vector fields $c: X\times X \to H\oplus H$ and rotations $M = M_s$ defined on $X\times X$ (and then on $H\oplus H$) by
\[ M_s\bra{x,y} = \bra{x_s,y_s} = \bra{e^{-s}x +\sqrt{1- e^{-2s}} y,-\sqrt{1- e^{-2s}} x +  e^{-s}y } \period \]
If we specify $c\bra{x,y} = (0, v\bra{x,y})$ we get the analogue of \eqref{eq-divergence-euclidean},
\begin{equation}
\label{eq-divergence-wiener}
 \divey{ v \bra{x_s,y_s }} = \sqrt{1- e^{-2s}} \sqa{\divex{ v  } } \bra{x_s, y_s} +  e^{-s} \sqa{\divey{ v  } } \bra{x_s, y_s} \period\end{equation}
If we specify moreover $v\bra{x,y} = b\bra{x}$ and $s = \veps$ we obtain
\begin{equation}\label{eq-diverngence-wiener-b} \divey{ b \bra{x_\veps}  } =  \sqrt{1- e^{-2\veps}} \dive b \bra{x_\veps} - e^{-s} \hat{b} \bra{x_\veps, y_\veps}   \in L^1(X\times X, \gamma\otimes \gamma)\end{equation}
because of the integrability assumptions on $b$ and its divergence. This shows that $A_\veps$ is integrable, and computations involving $B_\veps$ can be performed along the same lines.

\subsection{Anisotropic estimate}

We prove that \eqref{anisotropic} holds true in the Wiener setting, with
\[ \Lambda_{\rho} \bra{t,x}  = \int_X \abs{ \divey{ \hat{M}_{t,x} \bra{y} \rho\bra{y} }} d\gamma\bra{y}\comma \]
where $M_{t,x} \abs{Db} \bra{t,x} = Db\bra{t,x}$ is the polar decomposition of $Db = Db_t dt$ with respect to its total variation measure, which is  $\abs{Db} = \abs{Db_t} dt$, a finite measure on $(0,T) \times X$. Moreover, $\hat{M}$ denotes the field associated to $M$, defined in Section \ref{section-technical}.

We proceed as in the finite dimensional case. First, we fix $\veps>0$ and assume $b$ to be cylindrical smooth, in order to obtain an estimate for $r^\veps$ in terms of $Db$. Then, keeping $\veps$ fixed, we extend the validity of this estimate to any $BV$ vector field, first cylindrical and then general. Finally, we let $\veps \to 0$ and conclude.

For simplicity, but without any loss of generality, we assume that both $\nor{\beta'}_\infty \le 1$ and $\nor{u}_\infty \le 1$. As above, we omit to write as subscripts both $\rho$ and $t$.

\subsubsection{Let $\veps >0$ be fixed and $b$ be cylindrical smooth}

We perform some computations that give an estimate involving three terms, two of them being error terms (negligible as $\veps \to 0$) and the third providing the result.  Since Sobolev and $BV$ spaces are well-behaved with respect to linear push-forwards, we may safely work in some fixed finite-dimensional Gaussian space $(\rnum^N, \gamma_N)$.

If we write explicitly the expressions obtained in the previous step, we obtain the estimate

\begin{equation} \label{eq-anisotropic-wiener-1} \int \abs{ \varphi \beta'\bra{u_\veps} r^\veps } \le \int \abs{\varphi}\bra{x_\veps}\abs{ \divey{ \frac{   b\bra{x} -  e^{\veps}  b\bra{x_\veps} }{C_\veps}  \rho\bra{y} }}  dx dy \period\end{equation}
We add subtract $b\bra{x_\veps}$ in the difference and we split
\[ \int \abs{\varphi}\bra{x_\veps} \cur{ \frac{e^\veps-1} {C_\veps} \abs{ \divey{ b\bra{x_\veps} \rho\bra{y} }} +\abs{ \divey{ \frac{  b\bra{x_\veps} - b\bra{x} }{ C_\veps}  \rho\bra{y} }} } dx dy \period\]

The first term in the sum above gives the an error term which is smaller than 
\begin{equation} \label{anisotropic-wiener-first-error} \sqrt{\veps} \nor{\varphi}_\infty \sqa{ \nor{b}_1\nor{ \nabla{\rho}}_\infty+ \nor{\dive b}_1\nor{\rho}_\infty } \comma \end{equation} using \eqref{eq-divergence-wiener} and noticing that $C_\veps \le C \sqrt{\veps}$, for $\veps \in (0,1]$ (and $C$ is some absolute constant).

We focus then on what remains, namely the expression
\begin{equation} \label{eq-divergence-2} \int \abs{\varphi}\bra{x_\veps}  \abs{ \divey{ \frac{  b\bra{x_\veps} - b\bra{x} }{ C_\veps}  \rho\bra{y} } } dx dy  \period \end{equation} Since $b$ is cylindrical smooth, write
\[ b\bra{x_\veps} - b\bra{x} = \int_0^\veps \frac{d}{ds} b\bra{x_s} ds = \int_0^\veps Db\bra{x_s}y_s\frac{ds}{C_s}\comma \]
because of the identity $\frac{d}{ds} x_s  = y_s / C_s$. In all what follows, for brevity, we write
\[ \fint_0^\veps f\bra{s} = \frac{1}{ C_\veps} \int_0^\veps f\bra{s} \frac{ds}{C_s} \comma\]
where the notation is justified by the fact that, as $\veps \to 0$,
\begin{equation}
\label{ou-time-average} \frac{1}{C_\veps} \int_0^\veps \frac{ds}{C_s} \to 1 \period \end{equation}

Exchanging divergence and integration, we obtain
\begin{equation}  \label{eq-lebiniz-divergence} \begin{split} \divey{ \frac{ b\bra{x_\veps} -b \bra{x} }{C_\veps}  \rho\bra{y} } &=  \fint_0^\veps \divey{ Db\bra{x_s}y_s  \rho\bra{y} } = \\ &=  \fint_0^\veps \sqa{\divey{ Db\bra{x_s}y_s } \rho\bra{y} + \ang{Db\bra{x_s}y_s, \nabla \rho\bra{y} } } \period\end{split}\end{equation}
Let us consider the first term in the sum above: write $v\bra{x,y} =  Db\bra{x} y = \partial_y b\bra{x}$ and for $s \in (0,\veps)$, apply identity \eqref{eq-divergence-wiener}, to obtain 
\begin{equation} \label{eq-split-divergence}
\divey{ Db\bra{x_s} y_s  } =\sqrt{1-e^{-2s}} \sqa{\divex{ v} }  \bra{x_s, y_s} +  e^{-s} \sqa{\divey{ v  } } \bra{x_s, y_s}  \end{equation}

Since the term $\divex{v}$ above involves further spatial derivatives of $b$, the following identity, which can be obtained  by inspection in coordinates, is crucial:
\[ \divex{ v } \bra{x_s, y_s} = C_s \frac{d}{ds} \dive b \bra{x_s} + \hat{b} \bra{x_s,y_s}\comma \]
where we used the notation $\hat{b}$ introduced in the previous section. This allows to integrate by parts and conclude that
\[ \begin{split}  \fint_0^\veps \sqrt{1-e^{-2s}} \divex{ v } \bra{x_s, y_s} \rho\bra{y}  = \\ \sqa{ e^{-\veps} \dive b \bra{x_\veps} -  \fint_0^\veps \dive b \bra{x_s} e^{-s}} \rho\bra{y}  +  \fint_0^\veps \sqrt{1-e^{-2s}}  \hat{b}\bra{x_s,y_s}\rho\bra{y} \comma  \end{split} \]
where we used the fact that $\frac{d}{ds}\sqrt{1-e^{-2s}} = e^{-s}/C_s$.

Thanks to these computations we separate from \eqref{eq-divergence-2} another error term, smaller than
\[ \nor{\varphi}_\infty \nor{\rho}_\infty \sqa{ \int \abs{  e^{-\veps} \dive b \bra{x_\veps} - \fint_0^\veps \dive b \bra{x_s} e^{-s} } dxdy + \frac{\veps} {2 C_\veps} \nor{b}_1  } \period\]
The integrand above is a linear expression in $\dive b$, which reminds of some averaged Ornstein-Uhlenbeck. By rotational invariance of Gaussian measures and by \eqref{ou-time-average} above, its $L^1$ norm is bounded by some absolute constant, uniformly in $\veps \in (0,1]$. By densities of smooth functions in $L^1$, it defines therefore some a family of continuous operators $R_\veps\bra{\dive b} \bra{x,y}$ and we estimate 
\begin{equation} \label{anisotropic-wiener-second-error} \nor{\varphi}_\infty \nor{\rho}_\infty \sqa{ \nor{R_\veps\bra{\dive b} }_1 + \frac{\veps} {C_\veps} \nor{b}_1  }\period \end{equation}

The following expression contains precisely what remains to be estimated from \eqref{eq-divergence-2}, i.e.~the second term in the second line of \eqref{eq-lebiniz-divergence} and the second term in the RHS of \eqref{eq-split-divergence},
\[ \int \abs{\varphi}\bra{x_\veps} \fint_0^\veps \abs{ e^{-s}\sqa{ \divey{ v  } }\bra{x_s, y_s} \rho\bra{y} + \ang{v \bra{x_s, y_s} , \nabla \rho\bra{y} } } dx dy  \period\]
Once we exchange integration  and perform a change of variables $\bra{x,y} \mapsto \bra{x^s,y^s}$, which maps $x_\veps$ to $x_{\veps-s}$, we rewrite this expression in a way that easily allows an extension to the $BV$ case, namely,
\begin{equation}
\label{eq-divergence-third-term} \int  f \bra{x, \frac{Db}{\abs{Db} }\bra{x} } \abs{Db}\bra{dx}\comma \end{equation}
where
\[ f\bra{x, M }  = \fint_0^\veps \int\abs{\varphi}\bra{x_{\veps-s}} \abs{ e^{-s}\divey{ \hat{ M} \bra{ y }}  \rho\bra{y^s}   + \ang{\hat{M}\bra{y} , \bra{\nabla \rho} \bra{y^s}}  } dy  \comma \]
recalling that $\hat{M}\bra{y} = My$ in the finite dimensional setting.

\subsubsection{Keep $\veps >0$ fixed and extend the estimate to a general $b$}

The expression in \eqref{eq-anisotropic-wiener-1} is smaller than the sum of three terms, namely \eqref{anisotropic-wiener-first-error}, \eqref{anisotropic-wiener-second-error} and \eqref{eq-divergence-third-term}. We extend the validity of this fact to  cylindrical $BV$ fields, and then to the general case.

Under the assumption that $b$ is cylindrical, everything reduces to a computation in $\rnum^N$, so that it is possible to find smooth cylindrical fields $\bra{b_n}$ such that, as $n\to \infty$,
\[ \nor{b_n -b}_1 \to 0 \comma \quad  \nor{\dive b_n - \dive b}_1 \to 0\comma \quad \abs{Db_n}\bra{X} \to \abs{Db}\bra{X} \]
and even $\bra{Db_n}$ weakly-* converge to $Db$ (an approximating sequence extracted from the usual Ornstein-Uhlenbeck mollification provides such a sequence). The left hand side in \eqref{eq-anisotropic-wiener-1}, together with the first and second error terms \eqref{anisotropic-wiener-first-error}, \eqref{anisotropic-wiener-second-error} pass to the limit with respect to this convergence. The only trouble might be caused by \eqref{eq-divergence-third-term}, but the usual Reshetnyak continuity theorem applies (Theorem 2.39 in \cite{MR1857292}).

We now extend the estimate to cover general $BV$ fields. We consider $b^N = \mathbb{E}_N\sqa{\pi_N b }$ and let $N \to \infty$. Again, \eqref{eq-anisotropic-wiener-1}, together with the first and second error terms \eqref{anisotropic-wiener-first-error}, \eqref{anisotropic-wiener-second-error},  pass to the limit because of Proposition \ref{prop-cyl-approx}. To handle the term \eqref{eq-divergence-third-term}, we prove first that for every $N$ large enough so that both $\varphi$ and $\rho$ are $N$-cylindrical, it holds
\[ \int  f \bra{x, \frac{Db^N}{\abs{Db^N} }\bra{x} } d\abs{Db^N}\bra{x} \le \int  f\bra{x, \frac{Db}{\abs{Db} }\bra{x} } d\abs{Db}\bra{x} \period\]
This follows from Proposition \ref{prop-cyl-approx-2}, since by direct inspection, the left hand side above coincides with
\[  \int  f_N\bra{\pi_N \bra{x}, \frac{Db^N}{\abs{Db^N} }\bra{x}  }  \abs{Db^N}\bra{dx} \comma\]
where
\[  f_N\bra{x, M }  = \fint_0^\veps \int\abs{\varphi}\bra{x_{\veps-s}} \abs{ e^{-s}\divey{ My }  \rho\bra{y^s}   + \ang{My, \bra{\nabla \rho} \bra{y^s}}  } d\gamma_N\bra{y}  \comma \]
which is positively homogeneous and convex in the second variable. We get therefore
\[ \int  f \bra{x, \frac{Db^N}{\abs{Db^N} }\bra{x} } \abs{Db^N}\bra{dx} \le  \int  f_N\bra{x, \pi_N \frac{Db}{\abs{Db} }\bra{x}\pi_N } \abs{Db}\bra{dx} \period\]
We recognize that
\[ \pi_N M\pi_N y = \mathbb{E}^y_N \bra{ \pi_N \hat{M}\bra{y} }\comma \]
and so, again by Proposition \ref{prop-cyl-approx}, applied this time to $\hat{M}$, we obtain
\[ \dive_y {\pi_N M\pi_N y}  =\mathbb{E}_N\bra{ \dive_y{ \hat{M}\bra{y}} }\period \]
Combining these identities in the expression for $f_N$ and recalling that $\varphi$ and $\rho$ are $N$-cylindrical we conclude, since the conditional expectation $\eE_N$ is a contraction in $L^1\bra{\gamma\bra{dy}}$.

\subsubsection{We let $\veps \to 0$} The first error term \eqref{anisotropic-wiener-first-error} is infinitesimal, but also the term \ref{anisotropic-wiener-second-error}, because $\nor{R_\veps\bra{\dive b}}_1 \to 0$ when $b$ is smooth and cylindrical, by dominated convergence and \eqref{ou-time-average}. By uniform boundedness of $R_\veps$ in $L^1$ and again by the approximation provided by Proposition \ref{prop-cyl-approx}, this holds also for any field $b$ with $\dive b \in L^1(X,\gamma)$.

The term \eqref{eq-divergence-third-term} converges to
\[ \int \abs{\varphi}\bra{x}\sqa{ \int \abs{\divey{ \hat{M}_x \bra{y}} \rho\bra{y}  + \ang{ \hat{M}_x \bra{y}, \nabla \rho\bra{y}} } d\gamma\bra{y}} \abs{Db}\bra{dx} \comma\]
since the integrand converges everywhere, being $\varphi$ and $\rho$ cylindrical smooth, uniformly bounded by some constant because, for any $p \in ]1,\infty[$, it holds
\[ f\bra{x,M} \le c_p  \nor{\varphi}_\infty  \bra{ \nor{\rho}_p + \nor{\nabla \rho}_p} \abs{M}\period \]
and $\abs{M} \le 1$ as assured by the polar decomposition theorem.

\subsection{Optimization}

We prove that, for any Hilbert-Schmidt operator $M \in H\otimes H$, it holds
\[ \inf_\rho \int \abs{ \dive \bra{ \hat{M} \rho} \bra{y} } d\gamma\bra{y} =  0\]
where the infimum runs along smooth cylindrical functions $\rho$, with $\rho \ge 0$ and $\int \rho = 1$.

The proof goes as in the finite-dimensional case, once we remark that, for fixed $p>1$,
\[ \rho \mapsto \int \abs{ \dive\bra{ \hat{M} \rho} \bra{y} } d\gamma\bra{y} \period \]
is continuous with respect to convergence in $W^{1,p}(X,\gamma)$ and so, by density of smooth functions, the infimum may run along all $\rho \in \bigcup_{p>1}W^{1,p}$, with $\rho \ge 0$, $\int \rho = 1$.

Therefore, for fixed $T>0$, we repeat the same construction as in the finite-dimensional case, with $u_0 = 1$, because Proposition \ref{lemma-regularity-exponential} assures that $\rho = \frac{1}{T} \int_0^T u_t dt \in W^{1,p}(X,\gamma)$ for some $p(T)>1$, which gives that $\rho$ is admissible.

\bibliography{wienerbib}{}
\bibliographystyle{amsalpha}

\end{document}